\definecolor{labelkey}{rgb}{0,0.08,0.45}
\definecolor{refkey}{rgb}{0,0.6,0.0}
\newcommand{\menge}[2]{\big\{{#1}~\big |~{#2}\big\}}
\newcommand{\fenv}[1]%
{\ensuremath{\,\overrightarrow{\operatorname{env}}_{#1}}}
\newcommand{\benv}[1]%
{\ensuremath{\,\overleftarrow{\operatorname{env}}_{#1}}}
\newcommand{\scal}[2]{\left\langle{#1},{#2}  \right\rangle}
\newcommand{\RR}{\ensuremath{\mathbb R}}
\newcommand{\NN}{\ensuremath{\mathbb N}}
\newcommand{\lspan}{\ensuremath{\operatorname{span}}}
\newcommand{\aff}{\ensuremath{\operatorname{aff}}}
\newcommand{\Fix}{\ensuremath{\operatorname{Fix}}}
\newcommand{\Id}{\ensuremath{\operatorname{Id}}}
\DeclareMathOperator{\circum}{circumcenter}
\DeclareMathOperator{\dist}{dist}
\newtheorem{theorem}{Theorem}[section]
\newtheorem{lemma}[theorem]{Lemma}
\newtheorem{corollary}[theorem]{Corollary}
\newtheorem{proposition}[theorem]{Proposition}
\newtheorem{definition}[theorem]{Definition}
\theoremstyle{plain}{\theorembodyfont{\rmfamily}
}
\theoremstyle{plain}{\theorembodyfont{\rmfamily}
}
\theoremstyle{plain}{\theorembodyfont{\rmfamily}
}
\theoremstyle{plain}{\theorembodyfont{\rmfamily}
\newtheorem{example}[theorem]{Example}}
\theoremstyle{plain}{\theorembodyfont{\rmfamily}
}
\def\doi{DOI}
\newcounter{count}
\begin{document}

\title{\textrm{Circumcentering the Douglas--Rachford method}}
\author{R. Behling\thanks{Department of Exact Sciences, Federal University of Santa Catarina.
Blumenau, SC -- 88040-900, Brazil. E-mail:
\texttt{\{roger.behling,l.r.santos\}@ufsc.br.}}~~~~~
J.Y.\ Bello Cruz\thanks{Department of Mathematical Sciences, Northern Illinois University. Watson Hall 366, DeKalb, IL -- 60115-6117, USA. E-mail:
\texttt{yunierbello@niu.edu.}}~~~~~L.-R.\ dos Santos\footnotemark[1] }

\date{\today}

\maketitle \thispagestyle{fancy}

\vskip 10mm

\begin{abstract} We introduce and study a geometric modification of the Douglas--Rach\-ford method called the Circumcentered--Douglas--Rachford method. This method iterates by taking the intersection of bisectors of reflection steps for solving certain classes of feasibility problems. The convergence analysis is established for best approximation problems involving two (affine) subspaces and both our theoretical and numerical results compare favorably to the original Douglas--Rachford method. Under suitable conditions, it is shown that the linear rate of convergence of the Circumcentered--Douglas--Rachford method is at least the cosine of the Friedrichs angle between the (affine) subspaces, which is known to be the sharp rate for the Douglas--Rachford method. We also present a preliminary discussion on the Circumcentered--Douglas--Rachford method applied to the many set case and to examples featuring non-affine convex sets.

\noindent{\textbf{Keywords}~~Douglas--Rachford method;   Best approximation problem; Projection and reflection operators; Friedrichs angle; Linear convergence; Subspaces.}

\noindent{\textbf{Mathematics Subject Classification (2000)}~~Primary 49M27, 65K05, 65B99; 
Secondary  90C25.}
 \end{abstract}


\section{Introduction}

Projection and reflection schemes are celebrated tools for finding a point in the intersection of finitely many sets~\cite{Censor:2014gb}, a basic problem in the natural sciences and engineering (see, {\em e.g.}, \cite{Bauschke:2006ej} and \cite{Combettes:1993dq}). Probably the Douglas--Rachford method (DRM) is one of the most famous and well-studied of these schemes (see, {\em e.g.},~ \cite{BCNPW14,BCNPW15, Lions:1979kk,Svaiter:2011fb,Bauschke:2016bt}). Also known as averaged alternating reflections method, it was introduced in \cite{Douglas:1956kk} and has recently gained much popularity, in part thanks to its good behavior in non-convex settings (see, {\em e.g.},~\cite{Hesse:2014gi,ABglobal,ABTcomb,ABT16,BNlocal,benoist,Borwein:2011dq,Hesse:2013cv,Phan:2016hl}).

This paper has a two-fold aim: (i) \emph{improving} the original DRM by means of a simple geometric modification, and (ii) meeting the demand of more satisfactory schemes for the many set case.  

Regarding (i),  the proposed scheme is \emph{greedy} by means of distance to the solution set, that is, our iterate is the best possible point relying on successive reflections onto two subspaces.   In particular, we get an improvement  towards the solution set with respect to DRM iteration, which  arises from the average of two successive reflections. Also, we get a convergence rate at least as good as DRM's with a comparable computational effort per iteration, however with  numerical results  fairly favorable. The aim (ii) will be  treated in the last section. 

To consider the problem, let $\langle \cdot, \cdot\rangle$ denote the scalar product in $\RR^n$, $\|\cdot\|$ is the induced norm, that is, the euclidean vector or matrix norm, and $P_S$ denotes the orthogonal projection onto a nonempty closed and convex set $S\subset\RR^n$. The reflection onto $S$ is given by means of $P_S$, namely, $R_S(x)=(2P_S-\Id)(x)$, where $\Id$ stands for the identity operator. Note that $P_S(x)$ is simply the midpoint of the segment $[x,R_S(x)]$.

Our results are established for the following fundamental feasibility problem. Given two \emph{subspaces}  $U$, $V$ of $\RR^n$ and any point $x\in\RR^n$, we are interested in solving the \emph{best approximation problem}~\cite{Deutsch:2001fl} of finding the closest point to $x$ in $U\cap V$, {\em i.e.},
\begin{equation}\label{eq:general}
\text{Find } \bar x\in U\cap V\text{ such that } \|\bar x-x\|=\min_{w\in U\cap V} \|w-x\|.
\end{equation} For {subspaces} there exists a nice characterization of the best approximation problem above as $\bar x=P_{U\cap V}(x)$ if, and only if, $x-\bar x\in (U\cap V)^\perp$, {\em i.e.},
\[\langle x-\bar x, w\rangle =0\qquad \forall \, w\in U\cap V.\]
The classical DRM iteration at a point $x\in\RR^n$ yields a new iterate $T(x)\in\RR^n$ given by the midpoint between $x$ and $R_VR_U(x)$. That is, the Douglas--Rachford (DR) operator $T:\RR^n\rightarrow\RR^n$ designed to solve \eqref{eq:general} reads as follows
\[T=T_{U,V}\coloneqq \frac{\Id+R_VR_U}{2}.\]
It is known that, under suitable assumptions, DRM generates a sequence $\{T^k(x)\}_{k\in\NN}$ converging to the solution of the best approximation problem \eqref{eq:general} (see~\cite{BCNPW14}).

Let us now introduce our scheme, called the Circumcentered--Douglas--Rachford method (C--DRM): from a point $x\in\RR^n$ the next iterate is the circumcenter of the triangle of vertexes $x$, $y\coloneqq R_U(x)$ and $z\coloneqq R_VR_U(x)$, denoted by
\begin{equation}\label{CDR-op}C_T(x)\coloneqq\circum\{x,y,z\}.\end{equation}
By circumcenter we mean that $C_T(x)$ is equidistant to $x$, $y$ and $z$ and lies on the affine space defined by these vertexes. For all $x\in\RR^{n}$, $C_T(x)$ exists, is unique  and  elementary computable. Existence and uniqueness are obvious if the cardinality of the set $\{x,y,z\}$ is either $1$ or  $2$. In fact, if it happens that $x=y=z$, we have $C_T(x)=x\in U\cap V$ already --- the converse is also true, that is, if $C_T(x)=x$, then $x=y=z$. This means that the set of fixed points $\Fix C_T$ of the (nonlinear) operator  $C_T$ is equal to $U\cap V$. If the cardinality $\{x,y,z\}$ is $2$ then $C_T(x)$ is the midpoint between the two distinct points. If $x$, $y$ and $z$  are distinct both existence and uniqueness follow from elementary geometry. Thus, one would only have to worry about having a situation in which  $x$, $y$ and $z$  are simultaneously distinct and collinear. This cannot happen since  reflections onto subspaces are norm preserving. More than that, will further see that the distance of $x$, $y$ and $z$ to $U\cap V$ is exactly the same. Thus, the equidistance we are asking for in \eqref{CDR-op} turns out to be a necessary condition for a solution of \eqref{eq:general}. 

Note that C--DRM has indeed a $2$-dimensional search flavor: we will show that $C_T(x)$ is the closest point to $U\cap V$ belonging to $\aff\{x,y,z\}$, the affine space defined by $x$, $y$ and $z$, whose dimension is equal to $2$, if $x$, $y$ and $z$, are distinct. This property, together with the fact that the DR point $T(x)\in\aff\{x,y,z\}$, is the key to proving a better performance of C--DRM over DRM. An immediate consequence of this nice interpretation is that C--DRM will solve problems in $\RR^2$ in at most two steps.  Figure~\ref{figure2} serves as an intuition guide and illustrates our idea.

\begin{figure}[h!]
	\centering 
\includegraphics[width=.37\textwidth]{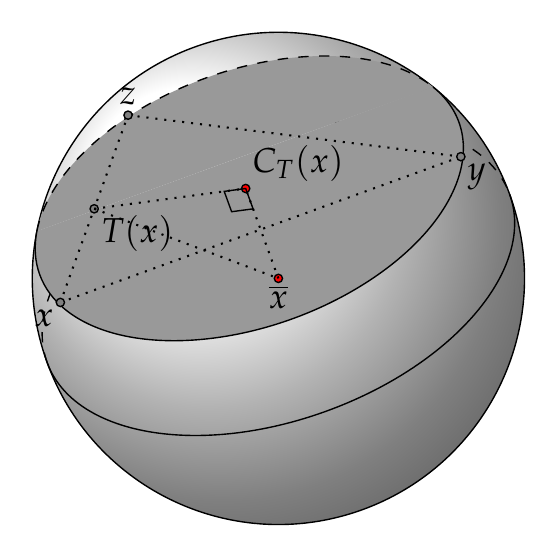}
	 \caption{Geometrical interpretation for circumcentering DRM}
	\label{figure2}
\end{figure}

This paper is organized as follows. In Section 2 we derive the convergence analysis proving that C--DRM has a rate of convergence for solving problem \eqref{eq:general} at least as good as  DRM's. Section 3 presents numerical experiments for subspaces along with some non-affine examples in $\RR^2$. Final remarks as how one can adapt C--DRM for the many set case and other comments on future work are presented in Section 4.

\section{Convergence analysis of the Circumcentered--Douglas--Rachford method}

In this section we present the theoretical advantages of using C--DRM over the classical DRM iteration for solving \eqref{eq:general}.  

In order to simplify the presentation we recall and fix some notation. For a point $x\in \RR^n$, we denote for now on  $y\coloneqq R_U(x)$ and  $z\coloneqq R_VR_U(x)$. Recall that, at the point $x\in\RR^n$, we define the C--DRM iteration as  $$C_T(x)\coloneqq  {\rm circumcenter}\{x,y,z\}$$ and the DR iteration (at the same point) is given by \[T(x)\coloneqq \frac{\Id+R_VR_U}{2}(x)=\frac{x+z}{2}.\]
In the following we present some elementary facts \cite[Theorem 5.8]{Deutsch:2001fl} needed throughout the text.

\begin{proposition}\label{Fato}
Let $S$ be a given subspace and $x \in\RR^n$ arbitrary but fixed. Then, for all $s\in S$ we have:
\begin{enumerate}
\item$ \langle x-P_S(x), s \rangle=0$;
\item $\|x-P_S(x)\|^2=\|x-s\|^2-\|s-P_S(x)\|^2$;
\item $\|x-s\|=\|R_S(x)-s\|$;
\item The projection and reflection mappings $P_S$ and $R_S$ are linear.
\end{enumerate}
\end{proposition}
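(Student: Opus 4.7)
The plan is to derive the four assertions in sequence, using only the variational characterization of $P_S(x)$ as the unique minimizer of $s\mapsto \|x-s\|^2$ over $S$, together with the linear structure of $S$. For (i), I would fix $s\in S$, note that $P_S(x)+ts\in S$ for every $t\in\RR$ because $S$ is a subspace, and conclude that the one-variable function $g(t):=\|x-P_S(x)-ts\|^2$ attains its minimum at $t=0$. Setting $g'(0)=0$ yields $\langle x-P_S(x),s\rangle=0$. The key observation to record at this point, which will drive the proof of (iv), is that this orthogonality actually \emph{characterizes} the projection: the unique $p\in S$ satisfying $\langle x-p,s\rangle=0$ for all $s\in S$ is $p=P_S(x)$.

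For (ii), I would decompose $x-s=(x-P_S(x))+(P_S(x)-s)$ and expand the square; the cross term $2\langle x-P_S(x),P_S(x)-s\rangle$ vanishes because $P_S(x)-s\in S$ and (i) applies. Rearranging yields $\|x-P_S(x)\|^2=\|x-s\|^2-\|s-P_S(x)\|^2$, i.e.\ a Pythagorean identity. For (iii), I would use $R_S(x)=2P_S(x)-x$ to write $R_S(x)-s=(P_S(x)-s)-(x-P_S(x))$, square, and once again kill the cross term via (i), obtaining $\|R_S(x)-s\|^2=\|P_S(x)-s\|^2+\|x-P_S(x)\|^2$; comparing with (ii) gives $\|R_S(x)-s\|=\|x-s\|$.

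Finally, for (iv), I would invoke the characterization recorded after (i). Given $\alpha,\beta\in\RR$ and $x_1,x_2\in\RR^n$, the candidate $p:=\alpha P_S(x_1)+\beta P_S(x_2)$ lies in $S$, and for every $s\in S$ one has $\langle\alpha x_1+\beta x_2-p,s\rangle=\alpha\langle x_1-P_S(x_1),s\rangle+\beta\langle x_2-P_S(x_2),s\rangle=0$, so by uniqueness $p=P_S(\alpha x_1+\beta x_2)$. Linearity of $R_S=2P_S-\Id$ is then immediate. None of the steps presents a genuine obstacle; the only subtle point is recognizing that (i) is a full characterization of $P_S(x)$, not merely a necessary condition — this is precisely what distinguishes the subspace case from the general convex case and is what ultimately makes $P_S$ and $R_S$ linear.
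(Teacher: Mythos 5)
Your proof is correct and complete. Note that the paper itself does not prove this proposition at all --- it is imported verbatim from \cite[Theorem 5.8]{Deutsch:2001fl} --- so there is no internal argument to compare against; your derivation (variational characterization of $P_S(x)$, vanishing cross terms for (ii) and (iii), and linearity via uniqueness in (iv)) is the standard textbook route, essentially the one in the cited reference. Your closing observation is the right one to emphasize: the orthogonality in (i) is a full \emph{characterization} of $P_S(x)$ precisely because $S$ is a subspace (for a general convex set one only gets the variational inequality $\langle x-P_S(x),\,s-P_S(x)\rangle\le 0$), and this is exactly what collapses the proof of (iv) to a one-line uniqueness argument and what fails in the non-affine setting the paper later explores numerically.
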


Our first lemma states that the projection of $y$ and $z$ onto $U\cap V$ coincide with $P_{U\cap V}(x)$ and that the distances of $x,y$ and $z$ to $U\cap V$ are the same. In addition to that, we have that the projection of the DR point $T(x)$ onto $U\cap V$ is as well given by $P_{U\cap V}(x)$.


\begin{lemma}\label{projecion2}
Let $x\in \RR^n$. Then,  \[ P_{U\cap V}(x)=P_{U\cap V}(y)=P_{U\cap V}(z)\] and \begin{equation}\label{distanciaT}\dist (x,U\cap V)=\dist (y,U\cap V)=\dist (z,U\cap V).\end{equation}
Moreover,  $P_{U\cap V}(x)=P_{U\cap V}(T(x))$ and \[P_{U\cap V}(T^k(x))\coloneqq P_{U\cap V}(\underbrace{T(\cdots T(T}_{k}(x))\cdots )).\]
\end{lemma}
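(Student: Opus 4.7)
The plan is to anchor everything on one point $p := P_{U\cap V}(x)$ and then verify successively that $p$ is also the projection of $y$, of $z$, of $T(x)$, and, by induction, of every $T^k(x)$. Since $U\cap V$ is itself a subspace, $p$ is characterized by $p\in U\cap V$ together with $w-p\in(U\cap V)^\perp$ for the point $w$ under consideration, so the whole argument reduces to bookkeeping with orthogonality.

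First I would note the elementary identity $R_U(x)-x = 2(P_U(x)-x)$, which by Proposition~\ref{Fato}(i) belongs to $U^\perp$. Because $U\cap V\subseteq U$, we have $U^\perp\subseteq (U\cap V)^\perp$, so $y-x\in (U\cap V)^\perp$. The same reasoning with $R_V$ shows $z-y\in V^\perp\subseteq (U\cap V)^\perp$. Splitting $y-p=(y-x)+(x-p)$, both summands lie in $(U\cap V)^\perp$ (the second by the definition of $p$), and since $p\in U\cap V$, this yields $P_{U\cap V}(y)=p$. The analogous decomposition $z-p=(z-y)+(y-p)$ then gives $P_{U\cap V}(z)=p$, closing the first chain of equalities.

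The distance identity \eqref{distanciaT} follows at once from Proposition~\ref{Fato}(iii): with $S=U$ and $s=p\in U$ one obtains $\|x-p\|=\|R_U(x)-p\|=\|y-p\|$, and with $S=V$, $s=p\in V$ one obtains $\|y-p\|=\|R_V(y)-p\|=\|z-p\|$. Each of these norms is the distance to $U\cap V$ because $p$ is the common projection.

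For the DR operator, since $U\cap V$ is a subspace, $P_{U\cap V}$ is linear by Proposition~\ref{Fato}(iv), and hence
\[
P_{U\cap V}\bigl(T(x)\bigr)=P_{U\cap V}\!\left(\tfrac{x+z}{2}\right)=\tfrac{1}{2}\bigl(P_{U\cap V}(x)+P_{U\cap V}(z)\bigr)=\tfrac{1}{2}(p+p)=p.
\]
The iterated statement then follows by a one-line induction: assuming $P_{U\cap V}(T^k(x))=p$, apply the base case to the point $T^k(x)$ in place of $x$ to conclude $P_{U\cap V}(T^{k+1}(x))=P_{U\cap V}(T^k(x))=p$. There is no genuine obstacle in the argument; the only point requiring care is to keep the orthogonality with respect to the smaller space $U\cap V$ rather than only $U$ or $V$, which is handled cleanly through the inclusions $U^\perp,\,V^\perp\subseteq (U\cap V)^\perp$.
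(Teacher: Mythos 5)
Your proof is correct, but it reaches the first chain of equalities by a genuinely different mechanism than the paper. You invoke the orthogonal-complement characterization of projections onto subspaces: since $y-x=2\bigl(P_U(x)-x\bigr)\in U^\perp\subseteq (U\cap V)^\perp$ and $z-y\in V^\perp\subseteq (U\cap V)^\perp$, the decompositions $y-p=(y-x)+(x-p)$ and $z-p=(z-y)+(y-p)$ place everything in $(U\cap V)^\perp$, so $p=P_{U\cap V}(x)$ is automatically the projection of $y$ and $z$; the distance identity \eqref{distanciaT} then comes separately from Proposition~\ref{Fato}(iii). The paper never uses orthogonality at this stage: it relies only on the isometry property of reflections (Proposition~\ref{Fato}(iii)) plus uniqueness of nearest points, squeezing $\|x-\bar x\|\le\|x-\bar y\|=\|y-\bar y\|\le\|y-\bar x\|=\|x-\bar x\|$ to force all four norms equal, which delivers the projection identity and the distance identity in one stroke. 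Your algebraic route is arguably more informative: it proves the stronger fact that every reflection step moves in a direction orthogonal to $U\cap V$, and it extends verbatim to compositions of reflections onto finitely many subspaces containing the intersection --- exactly the many-set setting sketched in Section~4 --- while the paper's metric argument has the complementary virtue of using nothing beyond distance preservation, so it would apply to any isometry fixing $U\cap V$ pointwise. Your handling of $T(x)$ via linearity of $P_{U\cap V}$ (Proposition~\ref{Fato}(iv)) and the one-line induction for $T^k(x)$ coincide with the paper's proof.
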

\begin{proof} We consider a bar to denote the projection onto the subspace $U \cap V$, {\em i.e.}, $\bar x\coloneqq P_{U\cap V}(x)$, $\bar y\coloneqq P_{U\cap V}(y)$, $\bar z\coloneqq P_{U\cap V}(z)$, etc.
By using Proposition \ref{Fato}(iii) for the reflection onto $U$, we get $\|x-w\|=\|y-w\|$ for all $w\in U\cap V$. In particular, $\|x-\bar y\|= \|y-\bar y\|$ and $\|y-\bar x\|= \|x-\bar x\|$ since $\bar x, \bar y \in U\cap V$. Therefore,
$$\|x-\bar x\|\le \|x-\bar y\|=  \|y-\bar y\|\le  \|y-\bar x\|= \|x-\bar x\|,$$ which implies $$\|x-\bar x\|= \|x-\bar y\|= \|y-\bar x\|=  \|y-\bar y\|,$$ 
and, of course, $\dist (x,U\cap V)=\dist (y,U\cap V)$ and $P_{U\cap V}(x)=P_{U\cap V}(y)$ follow. 

Now, the statements $\dist (y,U\cap V)=\dist (z,U\cap V)$ and $P_{U\cap V}(y)=P_{U\cap V}(z)$ can be derived by repeating the same argument with $y$ and $z$, where Proposition \ref{Fato}(iii) is then considered for the reflection onto $V$. As the projection onto subspaces is linear (Proposition \ref{Fato}(iv)) and $T(x)\coloneqq \frac{x+z}{2}$, we have $$P_{U\cap V}(T(x))=P_{U\cap V}\left(\frac{x+z}{2}\right)=\frac{P_{U\cap V}(x)}{2}+\frac{P_{U\cap V}(z)}{2}=\frac{\bar x}{2}+\frac{\bar x}{2}=\bar x.$$ The rest of the proof follows inductively. 

Indeed, we proved $P_{U\cap V}(s)=P_{U\cap V}(T(s))$ for all $s\in\RR^n$. Then, by setting $s\coloneqq T^{k-1}(x)$, we get $$P_{U\cap V}(T^{k-1}(x))=P_{U\cap V}(T(T^{k-1}(x)))=P_{U\cap V}(T^k(x)),$$ proving the lemma.  
\end{proof}

We proceed by characterizing $C_T(x)$ as the projection of any point $w\in U\cap V$ onto the affine subspace defined by $x,y$ and $z$ denoted by  $\aff\{x,y,z\}$.


\begin{lemma}\label{circum-as-proj}
Let $x\in \RR^n$ and $W^x\coloneqq \aff\{x,y,z\}$. Then, $$P_{W^x}(w)=C_T(x),$$ for all $w\in U\cap V$. In particular, $P_{W^x}(P_{U\cap V}(x))=C_T(x)$.
\end{lemma}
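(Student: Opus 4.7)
My plan is to show that any $w\in U\cap V$ is equidistant from the three vertices $x$, $y$, $z$, and then use the Pythagorean identity for orthogonal projections onto an affine subspace to transfer this equidistance to $P_{W^x}(w)$, which forces $P_{W^x}(w)$ to be the circumcenter.

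First I would fix an arbitrary $w\in U\cap V$ and establish the key identity
\[\|x-w\|=\|y-w\|=\|z-w\|.\]
This follows from Proposition~\ref{Fato}(ii) applied to the subspace $U\cap V$ with $\bar x:=P_{U\cap V}(x)$: for each point $q\in\{x,y,z\}$, since $P_{U\cap V}(q)=\bar x$ by Lemma~\ref{projecion2}, we have
\[\|q-w\|^{2}=\|q-\bar x\|^{2}+\|\bar x-w\|^{2}.\]
The common value $\|\bar x-w\|^{2}$ together with \eqref{distanciaT} (which gives $\|x-\bar x\|=\|y-\bar x\|=\|z-\bar x\|$) yields the claim.

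Next I would denote $p:=P_{W^{x}}(w)$ and, since $x,y,z\in W^{x}$, invoke the characterization of orthogonal projection onto the affine subspace $W^{x}$ (essentially Proposition~\ref{Fato}(ii) again, applied now to $W^{x}$) to obtain
\[\|w-q\|^{2}=\|w-p\|^{2}+\|p-q\|^{2}\qquad\text{for each }q\in\{x,y,z\}.\]
Combining this with the first step, the $\|w-p\|^{2}$ term cancels when subtracting pairs, and we conclude
\[\|p-x\|=\|p-y\|=\|p-z\|.\]
Thus $p$ lies in $W^{x}=\aff\{x,y,z\}$ and is equidistant from $x$, $y$, and $z$; by the very definition of $C_{T}(x)$ this forces $p=C_{T}(x)$. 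The particular case $w=P_{U\cap V}(x)$ is just the specialization.

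I do not expect a genuine obstacle: the only subtle point is checking that ``equidistance from the three vertices inside $\aff\{x,y,z\}$'' genuinely characterizes $C_T(x)$ in all degenerate configurations (when $x,y,z$ coincide or are only two distinct points), but this is handled by the case analysis already carried out in the introduction, where it is noted that $x,y,z$ cannot be simultaneously distinct and collinear because reflections onto subspaces are norm preserving.
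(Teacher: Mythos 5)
Your proposal is correct and takes essentially the same route as the paper's proof: both set $p\coloneqq P_{W^x}(w)$, apply the Pythagorean identity of Proposition~\ref{Fato}(ii) relative to $W^x$ to transfer the equidistance $\|x-w\|=\|y-w\|=\|z-w\|$ to $p$, and conclude $p=C_T(x)$ from the defining uniqueness of the circumcenter. The only cosmetic difference is that you obtain the key equidistance by Pythagoras through $\bar x=P_{U\cap V}(x)$ using Lemma~\ref{projecion2} and \eqref{distanciaT}, whereas the paper gets it directly from the norm-preservation of reflections (Proposition~\ref{Fato}(iii)) together with Lemma~\ref{projecion2}.
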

\begin{proof}
Let $w\in U\cap V$ be given and set $p\coloneqq P_{W^x}(w)$. Recall that $C_T(x)$ is defined by being the only point in $W^x$ equidistant to $x$, $y$ and $z$. So, to prove the lemma, we just need to show that $p$ is equidistant to $x$, $y$ and $z$. By Proposition~\ref{Fato}(ii) we have
\[
\begin{aligned}
\|x-p\|^2 &=\|x-w\|^2-\|w-p\|^2,\\
\|y-p\|^2 &=\|y-w\|^2-\|w-p\|^2,\\
\|z-p\|^2 &=\|z-w\|^2-\|w-p\|^2.
\end{aligned}
\]  
Proposition \ref{Fato}(iii) and Lemma \ref{projecion2} provided $\|x-w\|=\|y-w\|=\|z-w\|$. Hence,  the above equalities imply $\|x-p\|=\|y-p\|=\|z-p\|$, which proves the result.
\end{proof}


We have just seen that $C_T(x)$ is the closest point in $W^x=\aff\{x,y,z\}$ to $U\cap V$. In particular, the circumcenter $C_T(x)$ lies at least as close to $U\cap V$ as the DR point $T(x)$.

The next result shows that compositions of $C_T(\cdot)$ do not change the projection onto $U\cap V$, that is, for any $x\in\RR^n$ and $k\in\NN$ we have $$P_{U\cap V}(C^k_T(x)):=P_{U\cap V}(\underbrace{C_T(\cdots C_T(C_T}_{k}(x))\cdots ))=P_{U\cap V}(x).$$ This is a usual feature of algorithms designed to solve best approximation problems. 


\begin{lemma}\label{xc-proj}
Let $x\in \RR^n$ and $k\in\NN$. Then, $$P_{U\cap V}(C_T^k(x))=P_{U\cap V}(C_T(x))=P_{U\cap V}(x).$$
\end{lemma}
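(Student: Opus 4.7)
The plan is to establish the base case $P_{U\cap V}(C_T(x)) = P_{U\cap V}(x)$ directly, and then promote it to the iterated statement by straightforward induction.

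For the base case, the key observation is that $C_T(x)$ lies in the affine hull $W^x = \aff\{x,y,z\}$, so there exist scalars $\alpha,\beta,\gamma\in\RR$ with $\alpha+\beta+\gamma=1$ such that
\[
C_T(x) = \alpha x + \beta y + \gamma z.
\]
By Proposition~\ref{Fato}(iv), the operator $P_{U\cap V}$ is linear, so applying it commutes with this affine combination:
\[
P_{U\cap V}(C_T(x)) = \alpha\, P_{U\cap V}(x) + \beta\, P_{U\cap V}(y) + \gamma\, P_{U\cap V}(z).
\]
Lemma~\ref{projecion2} tells us that $P_{U\cap V}(x) = P_{U\cap V}(y) = P_{U\cap V}(z) = \bar x$, hence the right-hand side collapses to $(\alpha+\beta+\gamma)\bar x = \bar x = P_{U\cap V}(x)$, as desired.

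For the inductive step, assume $P_{U\cap V}(C_T^{k-1}(x)) = P_{U\cap V}(x)$. Setting $s \coloneqq C_T^{k-1}(x)$ and applying the base case to $s$ in place of $x$ yields
\[
P_{U\cap V}(C_T^k(x)) = P_{U\cap V}(C_T(s)) = P_{U\cap V}(s) = P_{U\cap V}(x),
\]
which closes the induction.

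I do not expect any serious obstacle here: the only mild subtlety is that one must use an affine (rather than linear) combination and invoke the identity $\alpha+\beta+\gamma=1$ to make the three equal projections collapse into a single $\bar x$. Note also that the argument is valid irrespective of whether $x,y,z$ are affinely independent, since any affine combination suffices. An alternative route would be to use Lemma~\ref{circum-as-proj} and orthogonality of $C_T(x)-\bar x$ to the direction subspace of $W^x$, but the affine-combination argument above is more direct and avoids having to relate the direction of $W^x$ to $(U\cap V)^\perp$.
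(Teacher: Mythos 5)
Your proof is correct, but it takes a genuinely different route from the paper's. The paper proves the base case $P_{U\cap V}(C_T(x))=P_{U\cap V}(x)$ geometrically: it invokes Lemma~\ref{circum-as-proj} to get $P_{W^x}(\bar x)=C_T(x)=P_{W^x}(\bar x_c)$ for both $\bar x\coloneqq P_{U\cap V}(x)$ and $\bar x_c\coloneqq P_{U\cap V}(C_T(x))$, then combines four Pythagoras identities to arrive at $\|\bar x-\bar x_c\|^2=-\|\bar x-\bar x_c\|^2$, forcing $\bar x=\bar x_c$. You instead bypass Lemma~\ref{circum-as-proj} entirely: since $C_T(x)\in\aff\{x,y,z\}$ by definition, you write it as an affine combination $\alpha x+\beta y+\gamma z$ with $\alpha+\beta+\gamma=1$, push the linear operator $P_{U\cap V}$ (Proposition~\ref{Fato}(iv), applicable since $U\cap V$ is a subspace) through the combination, and collapse the three equal projections from Lemma~\ref{projecion2} into $\bar x$ --- your remark that the coefficients must sum to $1$ is exactly the right point of care, and affine independence of $x,y,z$ is indeed irrelevant since only existence of some representation is needed. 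Your argument is the natural generalization of the trick the paper itself already uses at the end of Lemma~\ref{projecion2} to show $P_{U\cap V}(T(x))=P_{U\cap V}(x)$ (there the combination is the midpoint $\tfrac{x+z}{2}$), so it is arguably more economical and unifying: it proves the invariance simultaneously for \emph{every} point of $\aff\{x,y,z\}$, recovering the $T(x)$ case for free, and it extends verbatim to the many-set circumcenter sketched in the paper's final section. What the paper's Pythagoras route buys instead is that it leans only on the characterization in Lemma~\ref{circum-as-proj}, which the authors need anyway for Lemma~\ref{o-melhor}, keeping the argument within the same orthogonality toolkit; neither route is more general than the other, as both depend essentially on the subspace structure. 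Your inductive step, applying the base case at $s\coloneqq C_T^{k-1}(x)$, matches the paper's and is sound because the base case was proved for arbitrary $x\in\RR^n$.
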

\begin{proof} Note that by proving the second equality, the first follows easily by induction on $k$, likewise the one in the proof of Lemma \ref{projecion2}. Therefore, let us prove that $P_{U\cap V}(C_T(x))=P_{U\cap V}(x)$. Consider again the abbreviation $\bar x\coloneqq P_{U\cap V}(x)$ and $\bar x_c\coloneqq P_{U\cap V} (C_T(x))$. From Lemma \ref{circum-as-proj} we have that $P_{W^x}(\bar x)=C_T(x)$ and also $P_{W^x}(\bar x_c)=C_T(x)$. Thus, by Pythagoras it follows that
\begin{align}
\|x-\bar x\|^2   &=\|x-C_T(x)\|^2+\|\bar x-C_T(x)\|^2, \label{(1)}\\
\|x-\bar x_c\|^2 &=\|x-C_T(x)\|^2+\|\bar x_c-C_T(x)\|^2. \label{(2)}
\end{align}
Now, using again Pythagoras, for the triangles with vertexes $x$, $\bar x$, $\bar x_c$ and $C_T(x)$, $\bar x_c$, $\bar x$, we get
\begin{align}\label{(3)}\|x-\bar x_c\|^2& =\|\bar x-\bar x_c\|^2+\|x-\bar x\|^2\\
\intertext{and}
\label{(4)}\|C_T(x)-\bar x\|^2 &=\|\bar x-\bar x_c\|^2+\|C_T(x)-\bar x_c\|^2.
\end{align}
Then, we obtain
\[
\|\bar x-\bar x_c\|^2=\|x-\bar x_c\|^2-\|x-\bar x\|^2=\|\bar x_c-C_T(x)\|^2-\|\bar x-C_T(x)\|^2=-\|\bar x-\bar x_c\|^2,
\] where the first equality follows from \eqref{(3)}, the second from subtracting \eqref{(1)} and \eqref{(2)} and the third follows from \eqref{(4)}. Thus, $\|\bar x-\bar x_c\|=0$, or equivalently, $\bar x=\bar x_c$.
\end{proof}

Note that Lemma~\ref{xc-proj} is related to  \emph{Fej\'er monotonicity} (see  \cite[Proposition 5.9 (i)]{BC2011}). 


We will now measure the improvement of $C_T(x)$ towards $U\cap V$ by means of $x$ and $T(x)$.


\begin{lemma}\label{o-melhor}
For each $x\in \RR^n$ we have 
\begin{equation}\label{(21)}\dist (C_T(x), U\cap V)^2= \dist (T(x), U\cap V)^2-\|T(x)-C_T(x)\|^2.
\end{equation}
In particular,
\begin{equation*}\label{(20)}\dist (C_T(x), U\cap V)=\|C_T(x)-P_{U\cap V}(x)\|\leq \|T(x)-P_{U\cap V}(x)\|=\dist (T(x), U\cap V).
\end{equation*}
\end{lemma}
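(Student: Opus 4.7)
The plan is to deduce both statements from the projection characterization of $C_T(x)$ provided by Lemma~\ref{circum-as-proj} together with Lemma~\ref{xc-proj} and Lemma~\ref{projecion2}, via a single application of Pythagoras.

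First I would handle the two "distance = norm" identities flanking the inequality. Setting $\bar x \coloneqq P_{U\cap V}(x)$, Lemma~\ref{xc-proj} gives $P_{U\cap V}(C_T(x))=\bar x$, so $\dist(C_T(x),U\cap V)=\|C_T(x)-\bar x\|$; analogously, Lemma~\ref{projecion2} yields $P_{U\cap V}(T(x))=\bar x$, hence $\dist(T(x),U\cap V)=\|T(x)-\bar x\|$. Thus the "in particular" claim reduces to showing $\|C_T(x)-\bar x\| \le \|T(x)-\bar x\|$, which will follow automatically once \eqref{(21)} is proved.

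The main step is the equality \eqref{(21)}. The key observation is that $T(x)=\tfrac{x+z}{2}$ is a convex combination of two vertices of $\{x,y,z\}$, so $T(x)\in W^x = \aff\{x,y,z\}$. By Lemma~\ref{circum-as-proj}, $C_T(x) = P_{W^x}(\bar x)$. Hence $\bar x - C_T(x)$ is orthogonal to $W^x$, and in particular orthogonal to the vector $T(x)-C_T(x) \in W^x - C_T(x)$. Pythagoras applied to the right triangle with vertices $\bar x$, $C_T(x)$, $T(x)$ then gives
\[
\|T(x)-\bar x\|^2 \;=\; \|C_T(x)-\bar x\|^2 + \|T(x)-C_T(x)\|^2,
\]
which, after substituting the distance identities from the first paragraph, is exactly \eqref{(21)}. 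The inequality then follows by dropping the nonnegative term $\|T(x)-C_T(x)\|^2$.

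There is no real obstacle here; the argument is essentially a two-line Pythagorean computation. The only point that requires care is verifying that $T(x)$ indeed lies in $W^x$ so that the orthogonality of $\bar x - C_T(x)$ to $W^x$ applies to the segment from $C_T(x)$ to $T(x)$; this is why the explicit formula $T(x) = (x+z)/2$ is invoked.
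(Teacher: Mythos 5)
Your proof is correct and takes essentially the same route as the paper's: both characterize $C_T(x)$ as $P_{W^x}(P_{U\cap V}(x))$ via Lemma~\ref{circum-as-proj}, exploit the resulting orthogonality of $P_{U\cap V}(x)-C_T(x)$ to $W^x$ with $T(x)\in W^x$ playing the role of the test point, apply Pythagoras to obtain \eqref{(21)}, and invoke Lemmas~\ref{projecion2} and~\ref{xc-proj} to identify $P_{U\cap V}(C_T(x))=P_{U\cap V}(T(x))=P_{U\cap V}(x)$. Your explicit check that $T(x)=\tfrac{x+z}{2}\in W^x$ merely spells out a step the paper leaves implicit (having noted it earlier in the text).
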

\begin{proof}
Lemma \ref{circum-as-proj} says in particular that $P_{W^x}(P_{U\cap V}(x))=C_T(x)$, which is equivalent to saying that $\scal{s-C_T(x)}{P_{U\cap V}(x)-C_T(x)}=0$ for all $s$ on the affine subspace $W^x$. Now, taking $T(x)$ for the role of $s$ and using Pythagoras, we get \eqref{(21)}, since $P_{U\cap V}(C_T(x))=P_{U\cap V}(T(x))=P_{U\cap V}(x)$ due to Lemmas \ref{projecion2} and  \ref{xc-proj}. The rest of the result is direct consequence of Lemmas \ref{xc-proj} and \ref{projecion2}.
\end{proof}


The linear rate of convergence we are going to derive for C--DRM is given by the cosine of the Friedrichs angle between $U$ and $V$, defined below.

\begin{definition}
The \emph{cosine of the Friedrichs angle} $\theta_F$
between $U$ and $V$ is given by
\begin{equation*}
c_F(U,V) \coloneqq  \sup \menge{\scal{u}{v}}{u\in U\cap (U\cap
V)^\perp,\; v\in V\cap (U\cap V)^\perp,\;
\|u\|\leq 1,\; \|v\|\leq 1}. 
\end{equation*}
If context permits, we use just $c_F$ instead of $c_F(U,V)$.
\end{definition}

 Next we state some fundamental properties of the Friedrichs angle.

\begin{proposition}
\label{f:angle}
Let $U,V\subset\RR^n$ be subspaces, then:
\begin{enumerate}

\item 
\label{f:angleSolmon}
$0\le c_F(U,V) =c_F(V,U)= c_F(U^\perp,V^\perp)<1$.
\item
\label{f:angle3}
{$c_F=\|P_VP_U-P_{U\cap V}\|=\|P_{V^\perp}P_{U^\perp}-P_{U^\perp\cap V^\perp}\|$}. 
\end{enumerate}
\end{proposition}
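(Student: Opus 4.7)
The plan is to dispatch the elementary claims in (i) first, then establish the operator-norm identity in (ii), and use the latter to address the harder perp symmetry in (i). For (i), non-negativity is immediate (take $u=v=0$) and $c_F(U,V)=c_F(V,U)$ is the symmetry of the inner product. The strict inequality $c_F<1$ uses finite-dimensionality: the closed unit balls of $U\cap(U\cap V)^\perp$ and $V\cap(U\cap V)^\perp$ are compact, so the supremum is attained by some pair $(u,v)$; if it equalled one, Cauchy--Schwarz equality would force $u=v$, placing $u$ in $U\cap V\cap(U\cap V)^\perp=\{0\}$, contradicting $\|u\|=1$.

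For (ii), my key algebraic observation would be
\begin{equation*}
P_VP_U-P_{U\cap V}=P_VP_UP_{(U\cap V)^\perp},
\end{equation*}
which follows from $P_UP_{U\cap V}=P_VP_{U\cap V}=P_{U\cap V}$ (valid since $U\cap V\subset U$ and $U\cap V\subset V$). For the upper bound I would fix $x\in\RR^n$, set $u\coloneqq P_UP_{(U\cap V)^\perp}x$, verify that $u\in U\cap(U\cap V)^\perp$ and $P_Vu\in V\cap(U\cap V)^\perp$ by pairing against an arbitrary $w\in U\cap V$, and then exploit self-adjointness and idempotency of $P_V$ to estimate
\begin{equation*}
\|(P_VP_U-P_{U\cap V})x\|^2=\|P_Vu\|^2=\scal{u}{P_Vu}\le c_F\|u\|\|P_Vu\|\le c_F\|x\|\|P_Vu\|,
\end{equation*}
which yields $\|P_VP_U-P_{U\cap V}\|\le c_F$. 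For the reverse inequality, I would take unit vectors $u\in U\cap(U\cap V)^\perp$ and $v\in V\cap(U\cap V)^\perp$ with $\scal{u}{v}$ nearly attaining $c_F$; since $P_Uu=u$ and $P_{U\cap V}u=0$, one has $(P_VP_U-P_{U\cap V})u=P_Vu$, and $\|P_Vu\|\ge|\scal{P_Vu}{v}|=|\scal{u}{v}|$, so the supremum is recovered.

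The main obstacle is the perp identity $c_F(U,V)=c_F(U^\perp,V^\perp)$. My plan would be to reduce it via (ii) to proving $\|P_VP_U-P_{U\cap V}\|=\|P_{V^\perp}P_{U^\perp}-P_{U^\perp\cap V^\perp}\|$, and then to diagonalize $P_UP_VP_U$ and $P_{U^\perp}P_{V^\perp}P_{U^\perp}$ via the principal-angle decomposition, observing that their eigenvalues strictly between zero and one (those corresponding to directions outside $U\cap V$, respectively $U^\perp\cap V^\perp$) coincide, as they realize the same set of principal angles. Since this is a classical statement, I would also be content to invoke Solmon's theorem as presented in Deutsch's monograph \cite{Deutsch:2001fl}.
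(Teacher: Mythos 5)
Your proposal is correct, but it takes a genuinely different route from the paper, whose ``proof'' of this proposition consists entirely of citations: part (i) is referred to Theorems 13 and 16 of Deutsch's 1995 paper on angles between subspaces, and part (ii) to Lemma 9.5(7) of Deutsch's monograph. You instead supply self-contained arguments for almost everything: the compactness argument for $c_F<1$ is exactly the finite-dimensional shortcut (in Hilbert space $c_F<1$ is equivalent to $U+V$ being closed, as the paper itself remarks later, so the cited results are doing more work than is needed in $\RR^n$), and your proof of the first identity in (ii) via $P_VP_U-P_{U\cap V}=P_VP_UP_{(U\cap V)^\perp}$, the membership checks $u\in U\cap(U\cap V)^\perp$, $P_Vu\in V\cap(U\cap V)^\perp$, and the estimate $\|P_Vu\|^2=\scal{u}{P_Vu}\le c_F\|u\|\,\|P_Vu\|$ is sound (with the trivial caveat that applying the definition of $c_F$ requires normalizing $u$ and $P_Vu$, the case where either vanishes being immediate). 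The one place you fall back on the same device as the paper is the perpendicular symmetry $c_F(U,V)=c_F(U^\perp,V^\perp)$: your spectral sketch is correct --- writing $A\coloneqq P_VP_U-P_{U\cap V}$ one computes $A^*A=P_UP_VP_U-P_{U\cap V}$, so $\|A\|^2$ is the largest eigenvalue of $P_UP_VP_U$ in $[0,1)$, and the principal-angle (CS) decomposition splits $\RR^n$ into $U\cap V$, $U\cap V^\perp$, $U^\perp\cap V$, $U^\perp\cap V^\perp$ and two-dimensional generic blocks on which $U^\perp$ and $V^\perp$ meet at the same angle as $U$ and $V$, whence the eigenvalues in $(0,1)$ coincide --- but it is a sketch resting on the unproven decomposition, and your stated willingness to invoke Solmon's theorem from Deutsch's monograph is precisely the paper's move. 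In sum: your approach buys a transparent, finite-dimensional, essentially elementary proof; the paper's citations buy brevity and validity in infinite-dimensional Hilbert space, which matters for the extension the paper alludes to at the end of its Section 2.
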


\begin{proof}
(i) See \cite[Theorems 13 and 16]{Deutsch:1995ja}; (ii): See \cite[Lemma 9.5(7)]{Deutsch:2001fl}. 
\end{proof}

The following result is an elementary fact in Linear Algebra and will be used in sequel.

\begin{proposition}\label{MidpointSubspace}
Let a subspace $S\subset\RR^n$ be given. If $x,p\in \RR^n$ are such that their midpoint $$s \coloneqq \frac{x+p}{2}\in S,$$ then $\dist (x,S)=\dist (p,S)$.
\end{proposition}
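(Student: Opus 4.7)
The plan is to exploit the linearity of the projection onto a subspace (Proposition~\ref{Fato}(iv)) to reduce the claim to a one-line identity about the vectors $x-P_S(x)$ and $p-P_S(p)$.

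First I would write $\dist(x,S)=\|x-P_S(x)\|$ and $\dist(p,S)=\|p-P_S(p)\|$, so the goal becomes showing $\|x-P_S(x)\|=\|p-P_S(p)\|$. Next, since $s=(x+p)/2\in S$, we have $P_S(s)=s$. Applying linearity of $P_S$ to the definition of $s$ gives
\[
\frac{x+p}{2}=s=P_S(s)=\frac{P_S(x)+P_S(p)}{2},
\]
which rearranges to $x-P_S(x)=-(p-P_S(p))$. Taking norms yields the desired equality, and hence $\dist(x,S)=\dist(p,S)$.

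The argument is entirely routine; there is no real obstacle, since everything hinges on the fact that $P_S$ is a linear operator when $S$ is a subspace, which has already been recorded. The only thing to be slightly careful about is to invoke the right item of Proposition~\ref{Fato} (linearity) and to justify $P_S(s)=s$ from $s\in S$, both of which are immediate.
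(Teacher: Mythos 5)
Your proof is correct, and it takes a genuinely different route from the paper's. You invoke the linearity of $P_S$ (Proposition~\ref{Fato}(iv)) together with $P_S(s)=s$ to obtain the exact identity $x-P_S(x)=-(p-P_S(p))$, from which the equality of distances is immediate. The paper instead argues geometrically via two inequalities: writing $\hat x=P_S(x)$ and $\hat p=P_S(p)$, it constructs the reflected point $\tilde p\coloneqq \hat x+2(s-\hat x)=2s-\hat x\in S$, notes $\|\tilde p-p\|=\|x-\hat x\|$ by congruence, and deduces $\|\hat p-p\|\leq\|x-\hat x\|$ from the best-approximation property of $\hat p$; the symmetric construction gives the reverse inequality. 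Your argument is shorter and proves strictly more --- the residual vectors $x-P_S(x)$ and $p-P_S(p)$ are exact negatives, so $x$ and $p$ sit symmetrically about $S$ and in fact $P_S(p)=2s-P_S(x)$, information the paper's two-inequality scheme never extracts. The paper's argument, on the other hand, uses only the nearest-point characterization of the projection plus the fact that $S$ is closed under point reflection through $s$, so it does not need $P_S$ to be linear; within this paper that distinction is moot, since linearity has already been recorded in Proposition~\ref{Fato}(iv), which you correctly cite. Both steps you flag as needing care ($P_S(s)=s$ for $s\in S$, and linearity) are indeed available and immediate, so there is no gap.
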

\begin{proof}
Let $s\coloneqq\frac{x+p}{2}\in S$, $\hat x\coloneqq P_S(x)$ and $\hat p\coloneqq P_S(p)$. Set $\tilde p \coloneqq  \hat x+2(s-\hat x)\in S$ and note that $\tilde p$ is defined in such a way that the triangle with vertexes $x$, $\hat x$ and $s$ is congruent to the triangle formed by $p$, $\tilde p$ and $s$. In particular, $\|\tilde p-p\|=\|x-\hat x\|$. 
Therefore,
\begin{equation}\label{eq:MiddlePointSubspaces}
\|\hat p-p \|\leq\|\tilde p-p \|=\|x-\hat x\|.
\end{equation}
An analogous construction can be considered for the triangle with vertexes $p$, $\hat p$ and $s$ and the one with vertexes $x$, $\tilde x$ and $s$, where $\tilde x \coloneqq  \hat p+2(s-\hat p)\in S$, yielding $\|\hat x-x \|\leq\|\tilde x-x \|=\|p-\hat p\|$. This, combined with \eqref{eq:MiddlePointSubspaces}, proves the proposition.  
\end{proof}

The next lemma organizes and summarizes known properties of sequences generated by DRM, some of which will be important in the proof of our main theorem. It is worth noting that items (i) and (vi) are novel to our knowledge.

\begin{lemma}\label{AnyInitialPointDR}
Let $x\in \RR^n$ be given. Then, the following assertions for DRM hold:
\begin{enumerate}
	\item   $\dist (T^k(x), U+V)=\dist (x, U+V)$ for all $k\in \NN$, where $U+V =\lspan(U\cup V)$;
	\item The set $\Fix T\coloneqq \menge{x\in\RR^n}{T(x)=x}$ is given by $\Fix T =  (U\cap V)\oplus(U^\perp\cap V^\perp)$;
	\item The DRM sequence $\{T^k(x)\}_{k\in\NN}$ converges to $P_{\Fix T}(x)$ and for all $k\in \NN$, $$\|T^k(x)-P_{\Fix T}(x)\|\leq c^k_F\|x-P_{\Fix T}(x)\|;$$
	\item For all $k\in \NN$ we have $P_{U\cap V}(T^k(x))=P_{U\cap V}(x)$ and $P_{\Fix T}(T^k(x))=P_{\Fix T}(x)$;
	\item $P_{U\cap V}(x)=P_{\Fix T}(x)$ if, and only if, $x\in  U+V$;
	\item The DRM sequence $\{T^k(x)\}_{k\in\NN}$ converges to $P_{U\cap V}(x)$ if, and only if, \[x\in  U+V;\]

	\item The shadow sequences \[\{P_U(T^k(x))\}_{k\in\NN}\text{ and }\{P_V(T^k(x))\}_{k\in\NN}\] converge to $P_{U\cap V}(x)$.

\end{enumerate}
\end{lemma}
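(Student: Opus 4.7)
My plan is to treat items (i) and (vi) as the genuinely new content and to let the remaining items fall out of the classical linear--convergence theorem (iii), the fixed--point description (ii), and the orthogonality between $U\cap V$ and $U^\perp\cap V^\perp$ that (ii) produces. The order I would follow is (ii)$\,\to\,$(iii)$\,\to\,$(i)$\,\to\,$(iv)$\,\to\,$(v)$\,\to\,$(vi)$\,\to\,$(vii).

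For (ii), the chain of equivalences $T(x)=x\Longleftrightarrow R_VR_U(x)=x\Longleftrightarrow R_U(x)=R_V(x)\Longleftrightarrow P_U(x)=P_V(x)$ is immediate from $R_V^2=\Id$ and the formula $R_S=2P_S-\Id$ applied to $U$ and $V$. Setting $a\coloneqq P_U(x)=P_V(x)$ and $b\coloneqq x-a$ gives $a\in U\cap V$ and $b\in U^\perp\cap V^\perp$, yielding the direct sum description of $\Fix T$. For (iii) I would invoke the classical linear--convergence theorem for DRM on two subspaces already referenced in the introduction.

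Item (i) is the only place requiring a standalone calculation, and it is the step I expect to be the main obstacle. The key observation is that both $R_U$ and $R_V$ leave $U+V$ invariant (since $P_U,P_V$ take values in $U+V$) and both act as $-\Id$ on $(U+V)^\perp=U^\perp\cap V^\perp$. Decomposing $x=p+q$ with $p\coloneqq P_{U+V}(x)$ and $q\coloneqq P_{(U+V)^\perp}(x)$, one obtains $R_VR_U(x)=R_VR_U(p)+q$ with $R_VR_U(p)\in U+V$, so the $(U+V)^\perp$-component of $R_VR_U(x)$ is $q$ itself, not $-q$: the two applications of $-\Id$ compose to $+\Id$ on the complement. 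Averaging with $x$, the same component of $T(x)$ is again $q$, giving $\dist(T(x),U+V)=\|q\|=\dist(x,U+V)$; induction finishes (i).

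The remaining items combine the above. For (iv), $P_{U\cap V}(T^k(x))=P_{U\cap V}(x)$ is Lemma~\ref{projecion2} iterated, while $P_{\Fix T}(T^k(x))=P_{\Fix T}(x)$ follows from noticing that $\{T^{k+1}(x)\}_{k\in\NN}$ is a tail of $\{T^k(x)\}_{k\in\NN}$ and hence shares its limit, so (iii) forces $P_{\Fix T}(T(x))=P_{\Fix T}(x)$ and induction concludes. For (v), the orthogonality between $U\cap V$ and $U^\perp\cap V^\perp$ yields $P_{\Fix T}=P_{U\cap V}+P_{U^\perp\cap V^\perp}$, so equality of the two projections at $x$ is equivalent to $P_{U^\perp\cap V^\perp}(x)=0$, i.e., $x\in(U^\perp\cap V^\perp)^\perp=U+V$. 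Item (vi) is then immediate from (iii) and (v): $T^k(x)\to P_{U\cap V}(x)$ iff the already--established limit $P_{\Fix T}(x)$ equals $P_{U\cap V}(x)$ iff $x\in U+V$. Finally for (vii), continuity of $P_U$ together with the decomposition $P_{\Fix T}(x)=P_{U\cap V}(x)+b$, where $b\in U^\perp\cap V^\perp\subseteq U^\perp$, gives $P_U(T^k(x))\to P_{U\cap V}(x)$, and symmetrically for $P_V$.
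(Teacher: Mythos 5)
Your proposal is correct, but it takes a genuinely different route from the paper's at the two places where the paper actually does work. For item (i) --- which the paper singles out (with (vi)) as the novel content --- the paper argues metrically: it checks that the midpoints $\tfrac{1}{2}(x+y)=P_U(x)$, $\tfrac{1}{2}(y+z)=P_V(y)$ and $\tfrac{1}{2}(T(x)+y)=\tfrac{1}{2}(P_U(x)+P_V(y))$ all lie in $S=U+V$, and then applies Proposition~\ref{MidpointSubspace} (the congruent-triangle midpoint fact) three times to chain together $\dist(x,S)=\dist(y,S)=\dist(z,S)=\dist(T(x),S)$. You instead argue algebraically: decomposing $x=p+q$ along $U+V$ and $(U+V)^\perp=U^\perp\cap V^\perp$, you use that $R_U$ and $R_V$ preserve $U+V$ and act as $-\Id$ on its complement, so the composed reflection and hence $T$ fix the component $q$ exactly. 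This bypasses Proposition~\ref{MidpointSubspace} entirely and proves something strictly stronger than the paper states, namely $P_{(U+V)^\perp}(T^k(x))=P_{(U+V)^\perp}(x)$, not merely preservation of the distance. The second difference is self-containedness: the paper imports items (ii), (iii), (iv) and (vii) wholesale from the cited reference, whereas you cite only (iii) and prove the rest --- (ii) via the clean equivalence chain $T(x)=x\Leftrightarrow P_U(x)=P_V(x)$ (you should state explicitly that any $a+b$ with $a\in U\cap V$, $b\in U^\perp\cap V^\perp$ satisfies $P_U(a+b)=a=P_V(a+b)$, giving the reverse inclusion, though this is immediate), (iv) via the elegant observation that $\{T^{k+1}(x)\}$ is a tail of $\{T^k(x)\}$ so uniqueness of limits plus (iii) forces $P_{\Fix T}(T(x))=P_{\Fix T}(x)$, and (vii) from continuity of $P_U$ applied to the limit $P_{\Fix T}(x)=P_{U\cap V}(x)+b$ with $b\in U^\perp$. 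Your (v) also makes explicit the orthogonal splitting $P_{\Fix T}=P_{U\cap V}+P_{U^\perp\cap V^\perp}$ that the paper's terse remark (``$\Fix T$ specialized to $S$ reduces to $U\cap V$'') leaves implicit. The trade-off: the paper's midpoint argument for (i) is elementary and reuses a proposition it needs anyway, while your decomposition is shorter, yields a stronger invariant, and would generalize more transparently; your versions of (ii), (iv), (vii) make the lemma verifiable without consulting the reference, at the cost of a longer proof.
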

\begin{proof}
For the sake of notation set $S\coloneqq U+V$ and remind that $y\coloneqq R_U(x)$ and $z\coloneqq R_V(y)$. We have $\frac{1}{2}(x+y)=P_U(x)\in S$ and $\frac{1}{2}(y+z)=P_V(y)\in S$. Employing Proposition \ref{MidpointSubspace} yields $\dist (x,S)=\dist (y,S)=\dist (z,S)$. 
Also, $\frac{1}{2}(T(x)+y)= \frac{1}{2}(\frac{x+z}{2}+y) = \frac{1}{2}(P_U(x)+P_V(y))\in S$. Using again Proposition \ref{MidpointSubspace} we conclude that $\dist (T(x),S)=\dist (y,S)$. Hence, $\dist (T(x),S)=\dist (x,S)$ for all $x\in\RR^n$. A simple induction procedure gives us $\dist (T^k(x), U+V)=\dist (x, U+V)$ for all $k\in \NN$, proving (i).

The proofs of items (ii), (iii), (iv) and (vii)  are  in \cite{BCNPW14}. 

It is straightforward to check that $S^\perp=U^\perp\cap V^\perp$. Therefore, $\Fix T$ specialized to $S$ reduces to $U\cap V$ and (v) follows. (vi) is a combination of (ii) and (v).
\end{proof}

We are now in the position to present our main convergence result, which states that the best approximation problem \eqref{eq:general} can be solved for any point $x\in\RR^n$ with usage of C--DRM. 

\begin{theorem}\label{TeoremaSpan}
Let $x\in\RR^n$ be given. Then, the three C--DRM sequences \[\{C_T^k(P_U(x))\}_{k\in\NN}, \{C_T^k(P_V(x))\}_{k\in\NN} \text{ and } \{C_T^k(P_{ U+V}(x))\}_{k\in\NN}\] converge linearly to $P_{U\cap V}(x)$. Moreover, their rate of convergence is at least the cosine of the Friedrichs angle $c_F\in[0,1)$.
\end{theorem}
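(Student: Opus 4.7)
The plan is to reduce everything to the following sufficient observation: for \emph{any} starting point $x_0\in U+V$, the C--DRM iterates stay in $U+V$ and satisfy $\dist(C_T^k(x_0),U\cap V)\le c_F^k\,\dist(x_0,U\cap V)$. Applied to $x_0=P_U(x),\,P_V(x),\,P_{U+V}(x)$, this immediately yields the three claimed linear rates, and the limit is $P_{U\cap V}(x)$ because $U\cap V\subseteq U,V,U+V$ implies $P_{U\cap V}P_U=P_{U\cap V}P_V=P_{U\cap V}P_{U+V}=P_{U\cap V}$, so all three sequences have the same target $P_{U\cap V}(x_0)=P_{U\cap V}(x)$.

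First I would check invariance: $C_T(U+V)\subseteq U+V$. Since $U+V$ is a subspace containing both $U$ and $V$, the reflections $R_U=2P_U-\Id$ and $R_V=2P_V-\Id$ map $U+V$ into itself. Hence $y=R_U(x_0)\in U+V$ and $z=R_VR_U(x_0)\in U+V$, so $\aff\{x_0,y,z\}\subseteq U+V$ and therefore $C_T(x_0)\in U+V$.

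Next, the one-step contraction. Because $x_0\in U+V$, Lemma \ref{AnyInitialPointDR}(v) identifies $P_{\Fix T}(x_0)=P_{U\cap V}(x_0)$, so Lemma \ref{AnyInitialPointDR}(iii) gives
\begin{equation*}
\dist(T(x_0),U\cap V)=\|T(x_0)-P_{U\cap V}(x_0)\|\le c_F\,\|x_0-P_{U\cap V}(x_0)\|=c_F\,\dist(x_0,U\cap V).
\end{equation*}
Lemma \ref{o-melhor} then yields $\dist(C_T(x_0),U\cap V)\le\dist(T(x_0),U\cap V)\le c_F\,\dist(x_0,U\cap V)$.

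Finally, induction closes the argument. The invariance of $U+V$ lets us apply the same estimate with $x_0$ replaced by $C_T^k(x_0)$, while Lemma \ref{xc-proj} keeps the projection onto $U\cap V$ anchored at $P_{U\cap V}(x_0)$, so
\begin{equation*}
\|C_T^k(x_0)-P_{U\cap V}(x)\|=\dist(C_T^k(x_0),U\cap V)\le c_F^k\,\dist(x_0,U\cap V).
\end{equation*}
Since $c_F\in[0,1)$ by Proposition \ref{f:angle}(i), this is genuine linear convergence at rate at most $c_F$, and specializing $x_0$ to $P_U(x)$, $P_V(x)$, $P_{U+V}(x)$ delivers the three statements. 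The only real step with any content is the invariance $C_T(U+V)\subseteq U+V$; once that is in place, the Douglas--Rachford rate from Lemma \ref{AnyInitialPointDR}(iii)--(v) together with the ``circumcenter is never worse than DR'' estimate of Lemma \ref{o-melhor} does the work.
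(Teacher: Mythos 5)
Your proposal is correct and follows essentially the same route as the paper: the one-step contraction combines Lemma~\ref{o-melhor} with Lemma~\ref{AnyInitialPointDR}(iii) and (v), the induction is anchored by Lemma~\ref{xc-proj}, and the common limit is identified via $P_{U\cap V}P_U=P_{U\cap V}P_V=P_{U\cap V}P_{U+V}=P_{U\cap V}$ (the paper obtains the same identification by a short Pythagoras computation instead). A welcome refinement on your part is the explicit statement and proof of the invariance $C_T(U+V)\subseteq U+V$, which the paper's induction uses only tacitly when it invokes Lemma~\ref{AnyInitialPointDR}(v) at the iterates $C_T^{k-1}(u)$.
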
 
\begin{proof}
Obviously, $u, v, s\in  U+V$, with $u \coloneqq P_U(x)$, $v \coloneqq P_V(x)$ and $s\coloneqq P_{ U+V}(x)$. Let us first prove that $\bar u=\bar v=\bar x \coloneqq P_{U\cap V}(x)$, where $\bar u \coloneqq P_{U\cap V}(u)$, $\bar v \coloneqq P_{U\cap V}(v)$ and $\bar s \coloneqq P_{U\cap V}(s)$. The definition of $\bar u,\bar x$ allow us to employ Pythagoras and conclude that
$$
\|u-x\|^2+\|u-\bar x\|^2=\|x-\bar x\|^2 \leq \|x-\bar u\|^2=\|u-x\|^2+\|u-\bar u\|^2,
$$
which provides $\|u-\bar x\|=\|u-\bar u\|$. Thus $\bar u=\bar x$. We get $\bar v=\bar s=\bar x$ analogously, and by item (v) of Lemma \ref{AnyInitialPointDR} we further have $\bar u=P_{\Fix T}(u)=\bar v=P_{\Fix T}(v)=\bar s=P_{\Fix T}(s)=\bar x$.
Hence, it holds that
\begin{align*}
\|C_T(u)-\bar x\|=\|C_T(u)-\bar u\| & \leq \|T(u)-\bar u\|=\|T(u)-P_{\Fix T}(u)\| \\
                                    & \leq c_F \|u-P_{\Fix T}(u)\|=c_F \|u-\bar x\|,
\end{align*}
where the first inequality is by Lemma~\ref{o-melhor} and the second one by item (iii) of Lemma~\ref{AnyInitialPointDR}.
Recall that Lemma \ref{xc-proj} stated that $P_{U\cap V}(C_T^k(P_U(x)))=P_{U\cap V}(P_U(x))$ for all $k\in\NN$. So, $P_{U\cap V}(C_T^k(u))=P_{U\cap V}(u)=\bar x$ for all $k\in \NN$. Consider now the induction hypothesis $\|C_T^{k-1}(u)-\bar x\|\leq c_F^{k-1}\|u-\bar x\|$ for some $k-1$ --- the case $k-1=1$ was seen above --- and note that
\begin{align*}
\|C_T^k(u)-\bar x\| &=\|C_T(C_T^{k-1}(u))-P_{U\cap V}(C_T^{k-1}(u))\| \\
                    & \leq  \|T(C_T^{k-1}(u))-P_{U\cap V}(C_T^{k-1}(u))\| 
                    \\
                    & 
                    \leq c_F \|C_T^{k-1}(u)-P_{U\cap V}(C_T^{k-1}(u))\| \\ 
                    & = c_F \|C_T^{k-1}(u) -\bar x\| 
                    \leq c_Fc_F^{k-1}\|u-\bar x\|=c_F^k\|u-\bar x\|,
\end{align*}
where the first inequality is due to Lemma~\ref{o-melhor}. The second inequality above follows from Lemma \ref{AnyInitialPointDR} (iii) and (v), since $u\in U+V$, and the third is by the induction hypothesis. This proves the theorem for the sequence $\{C_T^k(P_U(x))\}$. The proof lines for the convergence of the sequences $\{C_T^k(P_V(x))\}$ and $\{C_T^k(P_{ U+V}(x))\}$ with the linear rate $c_F$ are analogous. 
\end{proof}

An immediate consequence is stated below.

\begin{corollary}\label{CorolSpan}
Let $x\in U+V$ be given. Then, the C--DRM sequence $\{C_T^k(x)\}_{k\in\NN}$ converges linearly to $P_{U\cap V}(x)$. Moreover, the rate of convergence is at least the cosine of the Friedrichs angle $c_F\in[0,1)$.
\end{corollary}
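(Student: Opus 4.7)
The plan is to observe that Corollary \ref{CorolSpan} is essentially a specialization of the third sequence in Theorem \ref{TeoremaSpan}. The key point is that when $x\in U+V$, the orthogonal projection $P_{U+V}(x)$ is simply $x$ itself, so the sequence $\{C_T^k(P_{U+V}(x))\}_{k\in\NN}$ appearing in Theorem \ref{TeoremaSpan} coincides with $\{C_T^k(x)\}_{k\in\NN}$.

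Specifically, I would first note that since $U+V$ is a (closed) subspace and $x\in U+V$, we have $P_{U+V}(x)=x$. Then by the conclusion of Theorem \ref{TeoremaSpan} applied to this $x$, the sequence $\{C_T^k(P_{U+V}(x))\}_{k\in\NN}=\{C_T^k(x)\}_{k\in\NN}$ converges linearly to $P_{U\cap V}(x)$ with rate at least $c_F\in[0,1)$. Since $c_F<1$ by Proposition \ref{f:angle}\ref{f:angleSolmon}, linear convergence is guaranteed.

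There is no real obstacle here since this is merely a direct corollary. The only thing worth verifying is that one does not need to re-derive the projection identity $P_{U\cap V}(C_T^k(x))=P_{U\cap V}(x)$ — it already follows from Lemma \ref{xc-proj}, which holds for any starting point. Thus the argument reduces to a one-line invocation of Theorem \ref{TeoremaSpan}.
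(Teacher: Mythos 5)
Your proposal is correct and matches the paper's intent exactly: the paper offers no separate proof, calling the corollary ``an immediate consequence'' of Theorem~\ref{TeoremaSpan}, and the intended argument is precisely your observation that $x\in U+V$ gives $P_{U+V}(x)=x$, so the third sequence of the theorem reduces to $\{C_T^k(x)\}_{k\in\NN}$. Your remark that Lemma~\ref{xc-proj} need not be re-derived is also accurate, since it holds for arbitrary starting points.
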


Although we believe that Theorem \ref{TeoremaSpan} holds for $\{C_T^k(x)\}$ itself, it is worth mentioning that considering the initial feasibility step $P_U(x)$ or $P_V(x)$ is totally reasonable, since we are assuming that the projection/reflection operators $P_U$, $P_V$, $R_U$ and $R_V$ are available. In addition to that, these feasibility steps keep the whole C--DRM sequence in $ U+V$, which can be very desirable. In this sense, let us look at two distinct lines $U,V\subset\RR^3$ intersecting at the origin and assume that their Friedrichs angle is not ninety degrees, {\em i.e.}, $c_F\in(0,1)$. C--DRM finds the origin after one or two steps when starting in $ U+V$, since $U+V$ is the plane containing the two lines. If the initial point is not in $ U+V$ and no feasibility step is taken, C--DRM may generate an infinite sequence. Therefore, running C--DRM in $ U+V$, a potentially smaller set than $\RR^n$, sounds attractive from the numerical point of view.

The  feasibility procedure employed in Theorem~\ref{TeoremaSpan} provides convergence to best approximation solutions for the DRM without the need of considering shadow sequences. This is formally presented in the following. 

\begin{corollary}\label{CorolDRM-feasible}
Let $x\in \RR^{n}$ be given. Then, the three DRM sequences 
\[\{T^k(P_U(x))\}_{k\in\NN},\; \{T^k(P_V(x))\}_{k\in\NN} \; \text{ and }\;  \{T^k(P_{ U+V}(x))\}_{k\in\NN}
\] converge linearly to $P_{U\cap V}(x)$. Moreover, their rate of convergence is  given by the cosine of the Friedrichs angle $c_F\in[0,1)$.
\end{corollary}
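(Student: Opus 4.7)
The plan is to deduce the corollary directly from the ingredients already assembled for Theorem~\ref{TeoremaSpan}, rather than redoing the convergence analysis for DRM from scratch. The key observation is that the three initial points $u := P_U(x)$, $v := P_V(x)$, and $s := P_{U+V}(x)$ all lie in $U+V$, so the whole statement reduces to showing that whenever $w \in U+V$, the DRM sequence $\{T^k(w)\}$ converges linearly to $P_{U\cap V}(x)$ with rate $c_F$.

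First I would verify that $P_{U\cap V}(u) = P_{U\cap V}(v) = P_{U\cap V}(s) = P_{U\cap V}(x)$. This is already done inside the proof of Theorem~\ref{TeoremaSpan} by a short Pythagoras argument: the projections $\bar u,\bar v,\bar s$ onto $U\cap V$ coincide with $\bar x := P_{U\cap V}(x)$, and a simple comparison of squared distances forces the equality. I would just cite that computation.

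Next I would apply Lemma~\ref{AnyInitialPointDR} in sequence. Part~(vi) says that for $w \in U+V$ the DRM iterates $\{T^k(w)\}$ converge to $P_{U\cap V}(w)$, which by the previous step equals $P_{U\cap V}(x)$ for $w \in \{u,v,s\}$. Part~(v) gives $P_{\Fix T}(w) = P_{U\cap V}(w) = P_{U\cap V}(x)$ for those $w$. Finally, part~(iii) supplies the linear bound
\[
\|T^k(w) - P_{U\cap V}(x)\| = \|T^k(w) - P_{\Fix T}(w)\| \leq c_F^{\,k}\,\|w - P_{\Fix T}(w)\| = c_F^{\,k}\,\|w - P_{U\cap V}(x)\|,
\]
for each $w \in \{u,v,s\}$, yielding the desired linear convergence with rate $c_F < 1$ (by Proposition~\ref{f:angle}(i)).

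There is no real obstacle here: the work has all been done in Lemma~\ref{AnyInitialPointDR} and in the opening paragraph of the proof of Theorem~\ref{TeoremaSpan}. The only thing worth emphasizing in the write-up is why a preliminary projection onto $U$, $V$, or $U+V$ is needed at all: without it one only obtains convergence of $\{T^k(x)\}$ to $P_{\Fix T}(x)$, which coincides with $P_{U\cap V}(x)$ precisely when $x \in U+V$ (Lemma~\ref{AnyInitialPointDR}(v)--(vi)). Thus the initial feasibility projection is exactly what bridges the classical DRM fixed-point convergence to a genuine best-approximation guarantee and simultaneously avoids the need to pass to shadow sequences.
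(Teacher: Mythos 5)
Your proposal is correct and follows essentially the same route as the paper's own proof: reduce to initial points in $U+V$, invoke the projection identities $P_{U\cap V}(P_U(x))=P_{U\cap V}(P_V(x))=P_{U\cap V}(P_{U+V}(x))=P_{U\cap V}(x)$ already established in the proof of Theorem~\ref{TeoremaSpan}, and conclude via Lemma~\ref{AnyInitialPointDR}. If anything, your write-up is slightly more careful than the paper's one-line proof, which cites only part~(vi) (giving convergence), whereas you make explicit that parts~(iii) and~(v) are what deliver the linear rate $c_F$.
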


\begin{proof} This result follows from the fact that  $P_{ U\cap V}(P_U(x))=P_{ U\cap V}(P_V(x))=P_{ U\cap V}(P_{U+V}(x)) = P_{ U\cap V}(x) $, established within the proof of Theorem~\ref{TeoremaSpan}, combined with Lemma~\ref{AnyInitialPointDR} (vi). 
\end{proof}

It is known that $c_F$ is the sharp rate for DRM \cite{BCNPW14}. This is not clear for C--DRM though and left as an open question in this work. One way of addressing this issue would be looking at the magnitude of improvement of C--DRM over DRM given by \eqref{(21)} in Lemma \ref{o-melhor}.

We finish this section by showing that Theorem~\ref{TeoremaSpan} and Corollaries~\ref{CorolSpan} and~\ref{CorolDRM-feasible} are applicable to affine subspaces with nonempty intersection, where the concept of the cosine of the Friedrichs angle is suitably extended.

\begin{corollary}\label{corol:affine-extension}
Let $A$ and $B$ be affine subspaces of $\RR^n$ with nonempty intersection and $p\in A\cap B$   arbitrary but fixed. Then, Theorem~\ref{TeoremaSpan} and Corollaries~\ref{CorolSpan} and~\ref{CorolDRM-feasible}  hold for $A$ and $B$, with the rate $c_F$ being the cosine of the Friedrichs angle between the subspaces $U_A\coloneqq A-\{p\}$ and $V_B\coloneqq B-\{p\}$.
\end{corollary}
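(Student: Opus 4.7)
The plan is to reduce the affine case to the subspace case by a single translation. Fix $p\in A\cap B$ and consider $\tau_p\colon\RR^n\to\RR^n$, $\tau_p(w)=w-p$. Under $\tau_p$, the affine sets $A$ and $B$ become the subspaces $U_A=A-\{p\}$ and $V_B=B-\{p\}$, and $\tau_p(A\cap B)=U_A\cap V_B$. Moreover $\tau_p(\aff(A\cup B))=U_A+V_B$. The choice of $p$ is inessential: if $q\in A\cap B$ then $q-p\in U_A$, hence $A-\{q\}=U_A-\{q-p\}=U_A$ (because $U_A$ is a subspace containing $q-p$), and likewise for $V_B$, so the Friedrichs angle $c_F(U_A,V_B)$ is intrinsic to $A$ and $B$.

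First I would verify that every operator appearing in Section~2 intertwines with $\tau_p$. From $P_A(y)=P_{U_A}(y-p)+p$ and $R_A=2P_A-\Id$ one gets
\[\tau_p\circ P_A=P_{U_A}\circ\tau_p,\qquad \tau_p\circ R_A=R_{U_A}\circ\tau_p,\]
and the same for $B$; composing yields $\tau_p\circ R_BR_A=R_{V_B}R_{U_A}\circ\tau_p$ and $\tau_p\circ T_{A,B}=T_{U_A,V_B}\circ\tau_p$. Because $\tau_p$ is an isometry that sends affine hulls to affine hulls and preserves equidistance, the uniquely defined circumcenter satisfies $\tau_p(C_{T_{A,B}}(y))=C_{T_{U_A,V_B}}(\tau_p(y))$; a trivial induction extends this to all iterates $C_{T_{A,B}}^k$. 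Finally, $\tau_p\circ P_{A\cap B}=P_{U_A\cap V_B}\circ\tau_p$ and $\tau_p\circ P_{\aff(A\cup B)}=P_{U_A+V_B}\circ\tau_p$, so the three feasibility starting points $P_A(x)$, $P_B(x)$, $P_{\aff(A\cup B)}(x)$ map under $\tau_p$ precisely to the three starting points $P_{U_A}(\tau_p(x))$, $P_{V_B}(\tau_p(x))$, $P_{U_A+V_B}(\tau_p(x))$ of Theorem~\ref{TeoremaSpan}.

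With the intertwining in hand, applying Theorem~\ref{TeoremaSpan} to $U_A,V_B$ at the point $\tau_p(x)$ gives linear convergence of the three sequences in $\RR^n$ to $P_{U_A\cap V_B}(\tau_p(x))$ with rate $c_F(U_A,V_B)$; translating back by $\tau_p^{-1}$ yields linear convergence of $\{C_T^k(P_A(x))\}$, $\{C_T^k(P_B(x))\}$, and $\{C_T^k(P_{\aff(A\cup B)}(x))\}$ to $P_{A\cap B}(x)$ at the same rate, which is exactly the affine analogue of Theorem~\ref{TeoremaSpan}. The statements of Corollary~\ref{CorolSpan} (starting from any $x\in\aff(A\cup B)$) and of Corollary~\ref{CorolDRM-feasible} (for DRM, using that intertwining also holds for $T_{A,B}$) transfer verbatim by the same translation. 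The only real obstacle is bookkeeping: one must patiently check that each of the ingredients used in Section~2, namely reflection, projection onto a single set, projection onto the intersection, projection onto the affine span, the circumcenter, and the DR operator, commutes with $\tau_p$ in the sense above; there is no additional analytic content.
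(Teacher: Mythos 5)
Your proposal is correct and follows essentially the same route as the paper: reduce to the subspace case by conjugating with the translation $w\mapsto w-p$, using the intertwining identities $R_{U_A}(x)+p=R_A(x+p)$, $T_{A,B}(x+p)=T_{U_A,V_B}(x)+p$ and $C_{T_{A,B}}(x+p)=C_{T_{U_A,V_B}}(x)+p$, then iterating. You are in fact slightly more thorough than the paper's proof, since you also spell out that the feasibility starting points $P_A(x)$, $P_B(x)$, $P_{\aff(A\cup B)}(x)$ and the targets $P_{A\cap B}(x)$ translate correctly, and you verify the independence of $c_F(U_A,V_B)$ from the choice of $p$, which the paper only remarks on after the proof.
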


\begin{proof}
Since $p\in A\cap B$, the translations $A-\{p\}$ and $B-\{p\}$ provide nonempty subspaces $U_A$ and $V_B$. Now, the elementary translation properties of reflections $R_{U_A}(x)+p=R_A(x+p)$ and $R_{V_B}(x)+p=R_B(x+p)$ give us the translation formulas for the correspondent Douglas-Rachford and circumcentering  operators:
\[
T_{A,B}(x+p)\coloneqq T_{U_A,V_B}(x)+p
\]
and
\[
C_{T_{A,B}}(x+p)\coloneqq C_{T_{U_A,V_B}}(x)+p,
\]
for all $x\in\RR^n$. This suffices to prove the corollary when setting $U\coloneqq U_A$ and $V\coloneqq V_B$ in Theorem~\ref{TeoremaSpan} and Corollaries~\ref{CorolSpan} and~\ref{CorolDRM-feasible}, because the above formulas imply that $ T_{A,B}^k(x+p)= T_{U_A,V_B}^k(x)+p$ and $C_{T_{A,B}}^k(x+p)= C_{T_{U_A,V_B}}^k(x)+p$,  for all $k\in \NN$.

\end{proof}

Simple manipulations let us conclude that $c_F= c_F(U_A,V_B)$, with $U_A$ and $V_B$ as above, does not depend on the particular choice of the  point $p$ in $A\cap B$. Therefore, $c_F$ can be referred to as the cosine of the Friedrichs angle between the affine subspaces $A$ and $B$ with no ambiguity.

We finish this section remarking that the computation of circumcenters is possible in arbitrary inner product spaces --- see~\eqref{eq:xc-linsys}. However, projecting/reflecting onto subspaces depends on their closedness. Now, in Hilbert spaces, it is well known that having  $c_F$ strictly smaller than $1$ is equivalent to asking $U+V$ to be closed~\cite[Theorem 13]{Deutsch:1995ja}. This is an   assumption that maintains  the linear convergence in Hilbert spaces for several projection/reflection methods and that would also enable us to extend our main results concerning C--DRM  to an infinite dimensional setting.

The following section is on numerical experiments and begins by showing how one can compute $C_T(x)$ by means of elementary and cheap Linear Algebra operations.


\section{Numerical experiments}\label{NI}

In this section, we make use of numerical experiments to show, as a proof of concept, that the good theoretical proprieties of $C_T(x)$ are also working in practical problems.

First, for a given point $x\in \RR^{n}$, we establish a procedure to find $C_T(x)$. We then discuss the stopping criteria used in our experiments and show the performance of C--DRM compared to DRM applied to problem \eqref{eq:general}. We also apply C--DRM and DRM to non-affine samples, which are not treated theoretically in the previous section. The experiments with these problems indicate as well a good behavior of C--DRM over DRM.    All experiments were performed using \texttt{julia}~\cite{Bezanson:2017gd} programming language and the pictures were generated using PGFPlots~\cite{Feuersanger:2016ut}.

\subsection{How to compute the circumcenter in $\RR^n$}

In order to compute $C_T(x)$, recall that $y \coloneqq  R_{U}(x)$ and $z \coloneqq  R_{V}(y) = R_{V}R_{U}(x)$. We have already mentioned that $C_T(x) = x$ if, and only if, the  cardinality of  $\aff\{x,y,z\}$ is 1. If this cardinality is 2, $C_T(x)$ is the midpoint between the two distinct points. 

Therefore, we will focus on the computation of $C_T(x)$ for the case where $x$, $y$ and $z$ are distinct.  According to Lemma~\ref{circum-as-proj},  $C_T(x)$ is still well defined, and it can be seen as the circumcenter of the triangle $\Delta xyz$ formed by the points $x$, $y$ and $z$, as illustrated in Figure~\ref{figure1}. Also, $T(x)$ lies in $\aff\{x,y,z\}$. 
\begin{figure}[h!]
	\centering
\includegraphics[width=.41\textwidth]{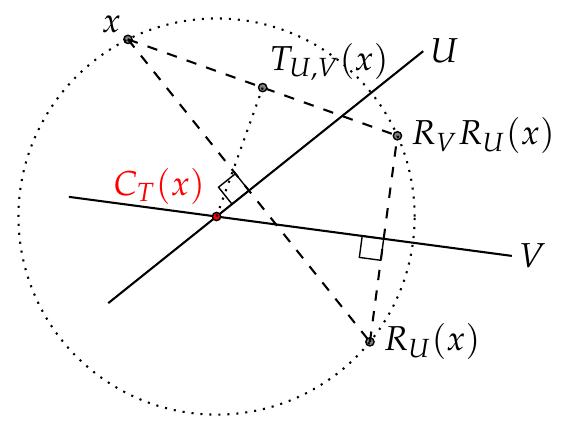}
	 \caption{Circumcenter on the affine subspace $\aff\{x,y,z\}$.}
\label{figure1}
\end{figure}

Let $x$ be \emph{the anchor} point of our framework and define $s_{U}\coloneqq y-x$ and $s_{V}\coloneqq z-x$, the vectors  pointing from $x$ to $y$ and from $x$ to $z$, respectively. Note that $\aff\{x,y,z\} = x + \lspan\{s_{U},s_{V}\}$, and since $C_T(x)\in \aff\{x,y,z\}$, the dimension of the ambient space, namely $n$, is irrelevant to the geometry. The problem is \emph{intrinsically} a $2$-dimensional one, regardless of what $n$ is.

We want to find the vector  $s\in \aff\{x,y,z\}$, whose projection onto each vector   $s_{U}$ and $s_{V}$   has its endpoint at the midpoint of the line segment from $x$ to $y$ and $x$ to $z$, that is, 
\[
P_{\lspan\{s_U\}}(s) = \frac{1}{2}s_U \quad \text{ and } \quad P_{\lspan\{s_V\}}(s) = \frac{1}{2}s_V.
\]   
This requirement yields 
\begin{equation*}
	\begin{cases}
	\left\langle s_U,s\right\rangle = \frac{1}{2} \|s_U\|^2,\\
	
	\left\langle s_V,s\right\rangle = \frac{1}{2} \|s_V\|^2.
		\end{cases}
\end{equation*}
By writing $s=\alpha s_U + \beta s_V$ we get the $2\times 2$ linear system with unique solution $(\alpha,\beta)$ 
\begin{equation}
  \label{eq:xc-linsys}
  \begin{cases}
  \alpha\| s_U\|^2 +   \beta\left\langle s_U,s_V\right\rangle  = \frac{1}{2} \|s_U\|^2,\\
   \alpha\left\langle s_U,s_V\right\rangle  + \beta\| s_V \|^2= \frac{1}{2} \|s_V\|^2.
    \end{cases}
\end{equation}
Hence,   
\[C_T(x) = x + \alpha s_U + \beta s_V.\] 
Note that \eqref{eq:xc-linsys} enables us to calculate $C_T(x)$ in arbitrary inner product spaces.

\subsection{The case of two subspaces}

In our experiments we randomly generate $100$ instances with subspaces $U$ and $V$ in $\RR^{200}$ such that $U\cap V\neq \{0\}$. Each instance is run for $20$ initial random points. Based on  Theorem~\ref{TeoremaSpan} and Corollary~\ref{CorolDRM-feasible}, we monitor the  C--DRM sequence $\{C_T^k(P_V(x))\}$ and the DRM sequence $\{T^k(P_V(x))\}$. Note that the DRM sequence will always converge to $U\cap V$, since $P_V(x)\in  U+V$. For the C--DRM sequence, such hypothesis does not seem to be necessary, however for the sake of fairness, we choose to monitor this sequence in the same way. We incorporate the method of alternating projections (MAP) (see, \emph{e.g.}, \cite{Deutsch:1985gz}) in our experiments as well. MAP generates a sequence $\{(P_VP_U)^k(x)\}$ that lies automatically in $ U+V$ for all $k\geq 1$. 

Let $\{s^k\}$ be any of the three sequences that we monitor. We considered two tolerances
\[
\varepsilon_1 \coloneqq 10^{-3} \text { and } \varepsilon_2 \coloneqq 10^{-6},
\]
and employed two stopping criteria: 
\begin{itemize}
	\item the \emph{true error}, given by 
$
\|s^k - \bar x\|< \varepsilon_{i};
$ 
and
\item the \emph{gap distance}, given by
$
\|P_U(s^k) - P_V(s^k)\|< \varepsilon_{i},
$
for $i=1,2$.
\end{itemize}
We performed tests with two tolerances in order to challenge C--DRM by asking for more precision. 
Also, one can view the \emph{true error} as the best way to assure one is sufficiently close to $U\cap V$, and in our case $\bar x\coloneqq P_{U\cap V}(x)$ is easily available. However, this is not the case in applications, thus, we also utilized the \emph{gap distance}, which we consider a reasonable measure of infeasibility.

In order to represent the results of our numerical experiments, we used the Performance Profiles~\cite{Dolan:2002du}, a analytic tool that allows one to compare several different algorithms on a set of problems with respect to a performance measure or \emph{cost}, which in our case is the number of iterations, providing the visualization and interpretation of the results of benchmark experiments.  The rationale of choosing number of iterations  as performance measure here is that in each of the sequences that are monitored, the majority of the computational cost involved is equivalent ---  two orthogonal projections onto the subspaces $U$ and $V$ per iteration have to be computed for each method. The graphics  were generated using \texttt{perprof-py}~\cite{Siqueira:2016ej}.

\begin{figure}[!ht]
\begin{subfigure}[t]{0.490\textwidth}
\centering
\includegraphics[width=\textwidth]{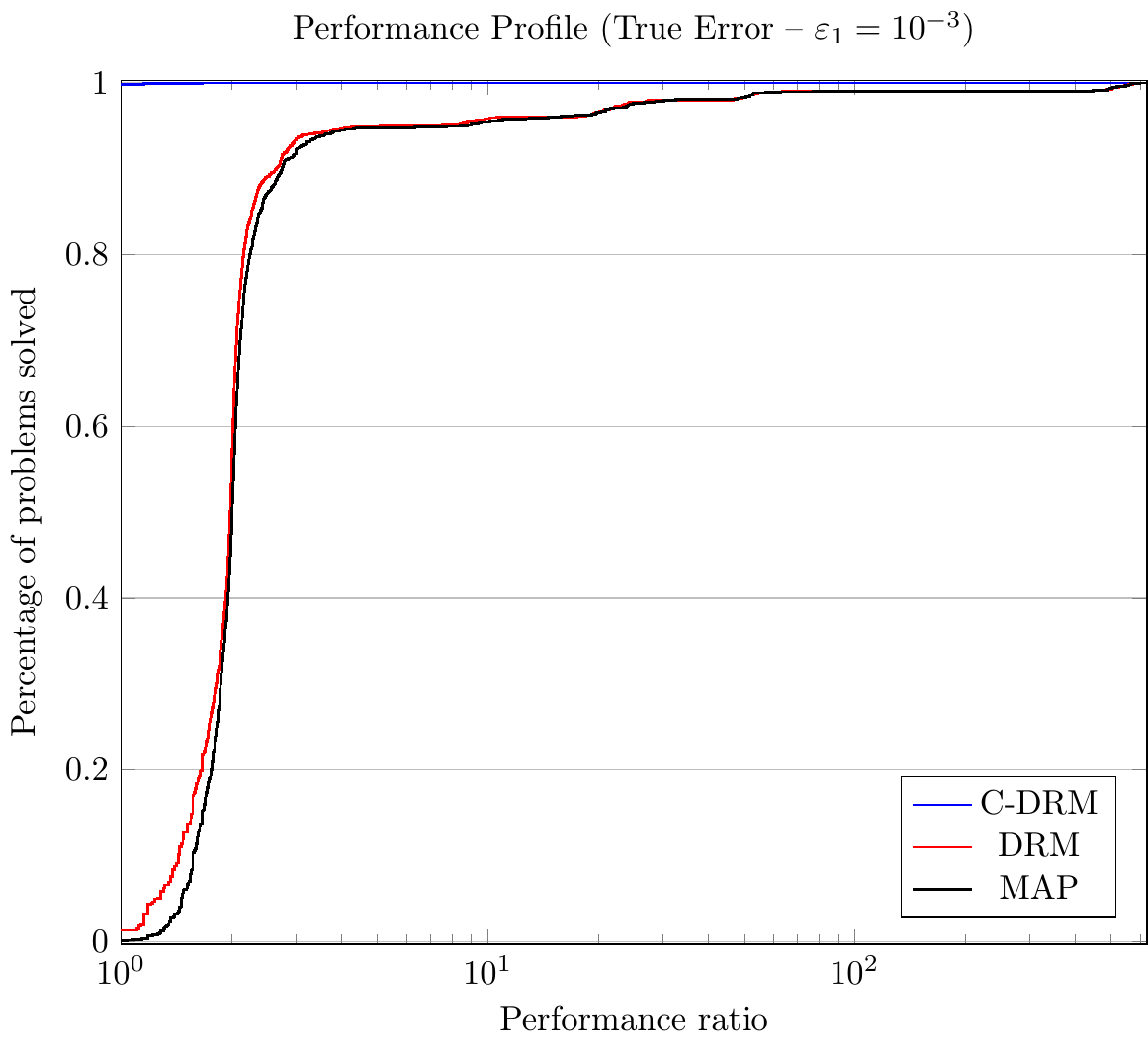}
	 \caption{\label{fig:pp-twosubspaces1}}
\end{subfigure}
  \, 
\begin{subfigure}[t]{0.490\textwidth}
\centering
\includegraphics[width=\textwidth]{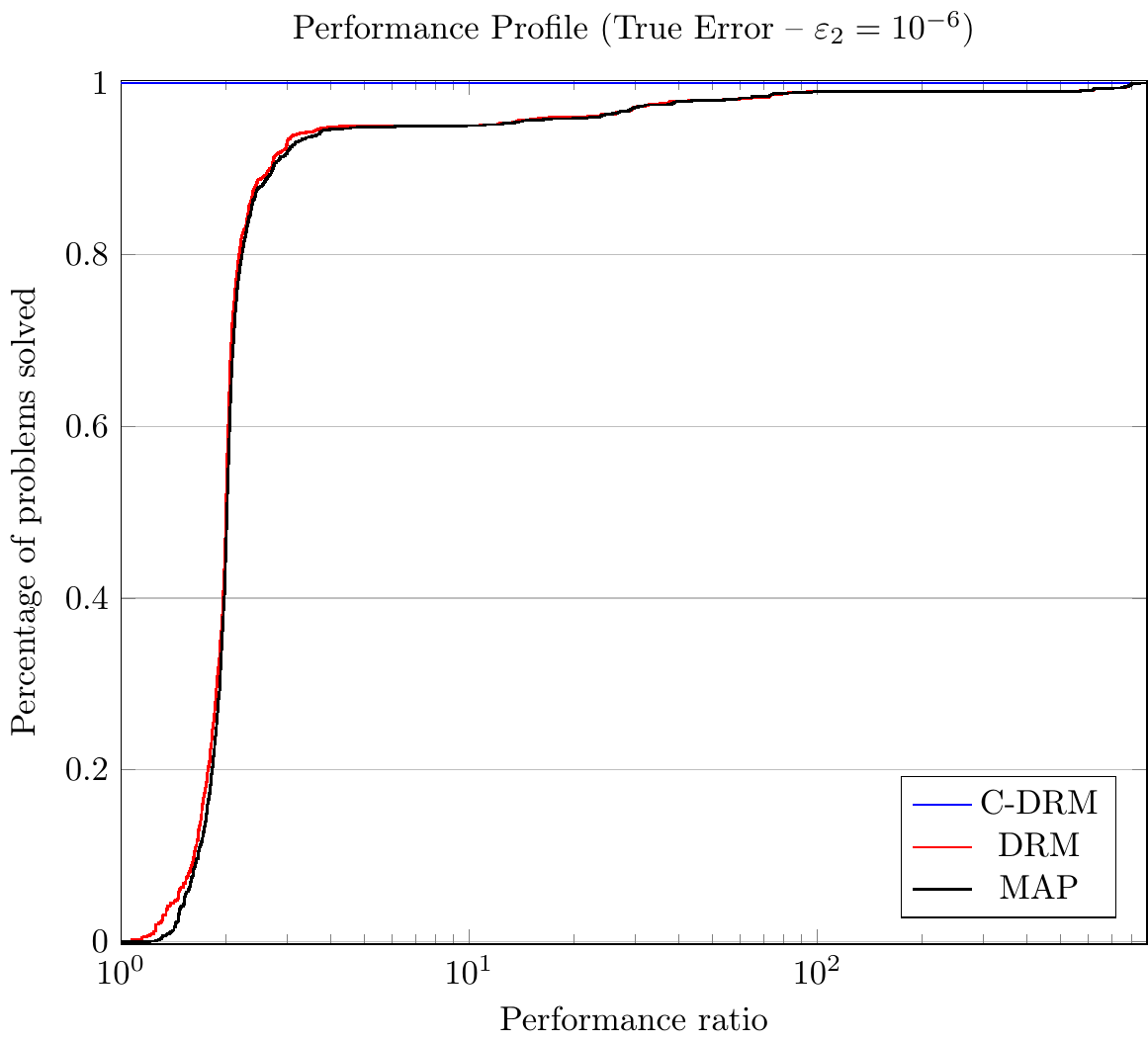}
	 \caption{\label{fig:pp-twosubspaces2}}
	
    \end{subfigure}
\\
    \begin{subfigure}[t]{0.490\textwidth}
    \centering
    \includegraphics[width=\textwidth]{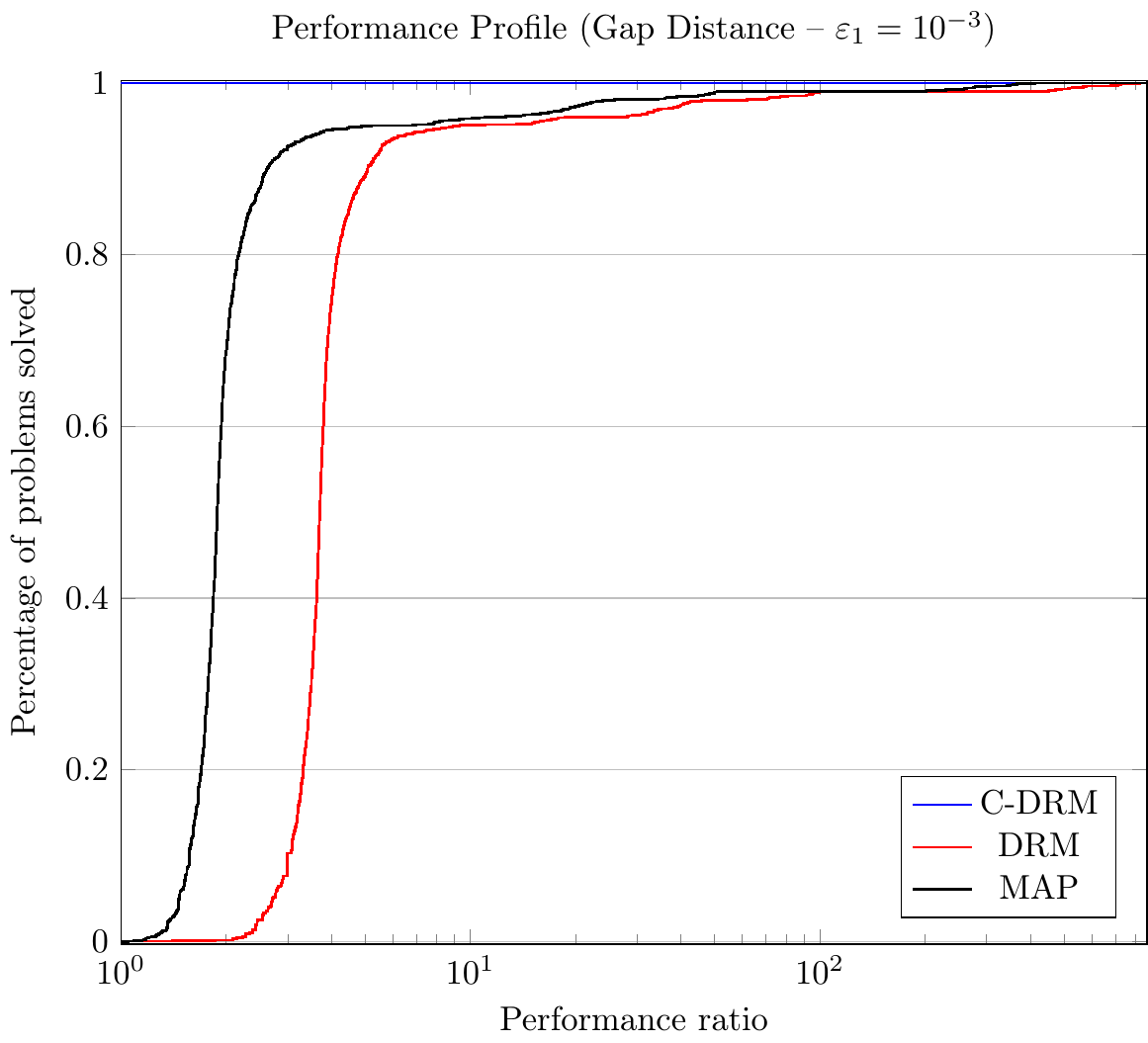}
    	 \caption{\label{fig:pp-twosubspaces3}}
    \end{subfigure}
      \, 
    \begin{subfigure}[t]{0.490\textwidth}
    \centering
    \includegraphics[width=\textwidth]{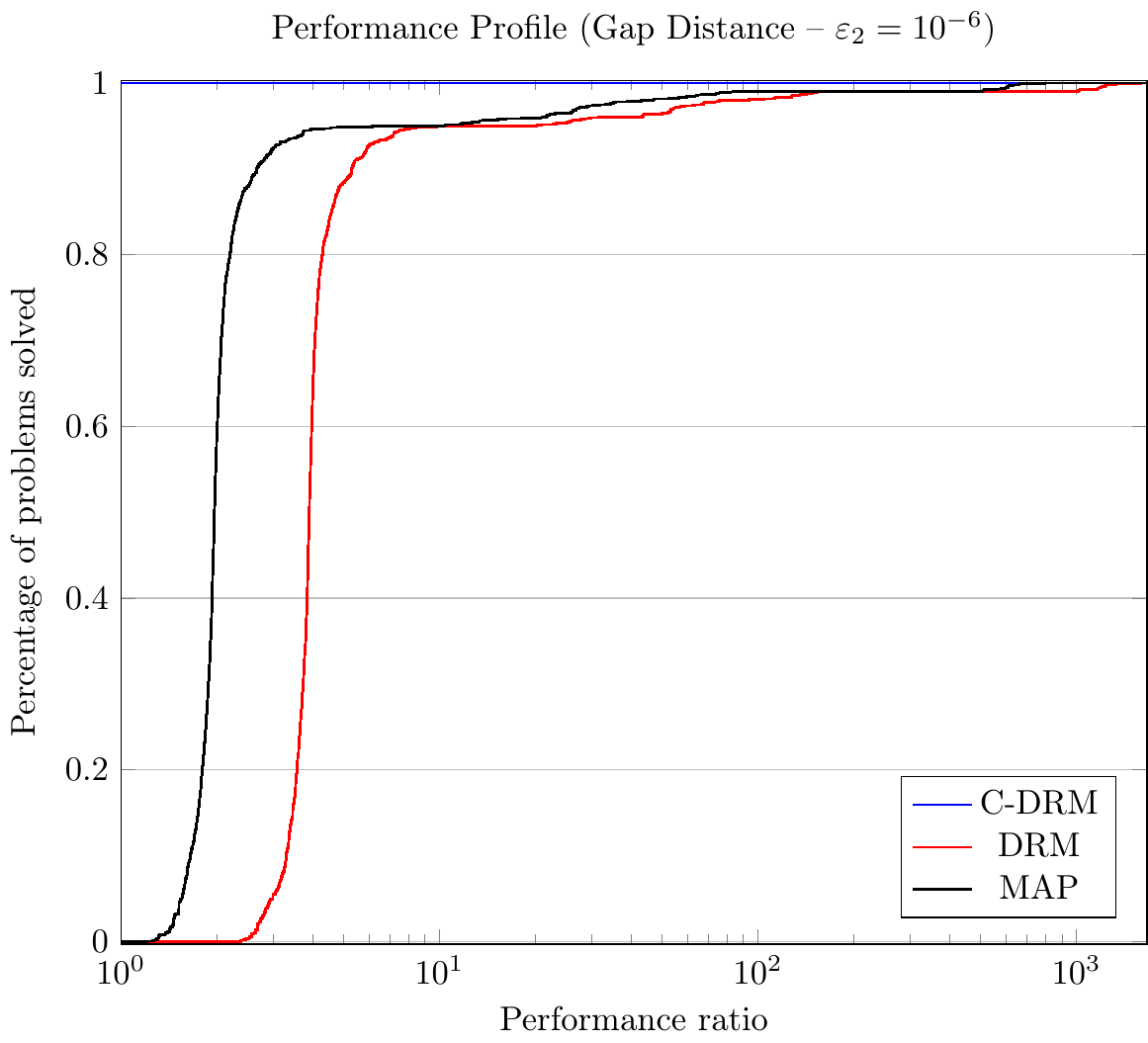}
    	 \caption{\label{fig:pp-twosubspaces4}}
    \end{subfigure}

\caption{Experiments with two subspaces.\label{fig:pp-twosubspaces}}
\end{figure}

The numerical experiments shown in Figure~\ref{fig:pp-twosubspaces}  confirm the theoretical results obtained in this paper, since C--DRM has a much better performance than DRM. For $\varepsilon_2 = 10^{-6}$, C--DRM solves all instances and choices of  initial points in less iterations than DRM, regardless the stopping criteria (See Figures~\ref{fig:pp-twosubspaces2} and \ref{fig:pp-twosubspaces4}). For  $\varepsilon_1 = 10^{-3}$, using the true error criteria, according to Figure~\ref{fig:pp-twosubspaces1} a tiny part of the instances were solved faster by DRM, however this behavior was not reproduced with the gap distance criteria (see Figure~\ref{fig:pp-twosubspaces4}). MAP has $c_F^2$  as asymptotic linear rate (see \cite{Kayalar:1988jk}) and it was beaten by C--DRM in all our instances. This gives rise to the interesting question of whether the rate $c_F^2$ can be theoretically achieved by C--DRM. 
\begin{figure}[!ht]
\begin{subfigure}[t]{0.40\textwidth}
\centering
\includegraphics[height=.25\textheight]{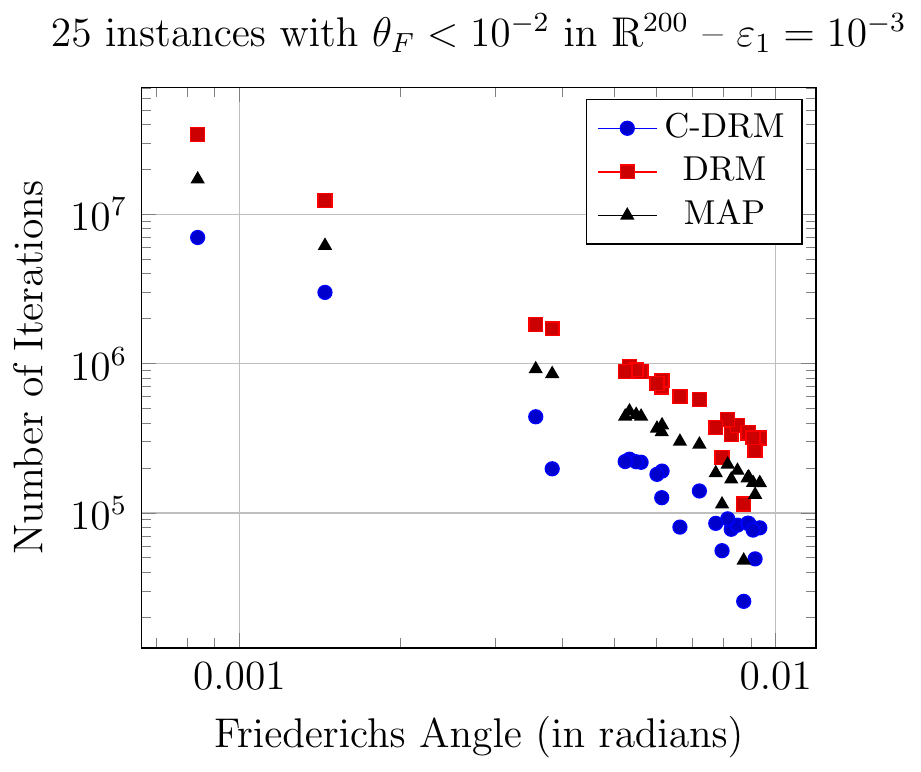}
	 \caption{\label{fig:HighcF-twosubspaces1}}
\end{subfigure}
  \hspace*{.5in}
\begin{subfigure}[t]{0.40\textwidth}
\centering
\includegraphics[height=.25\textheight]{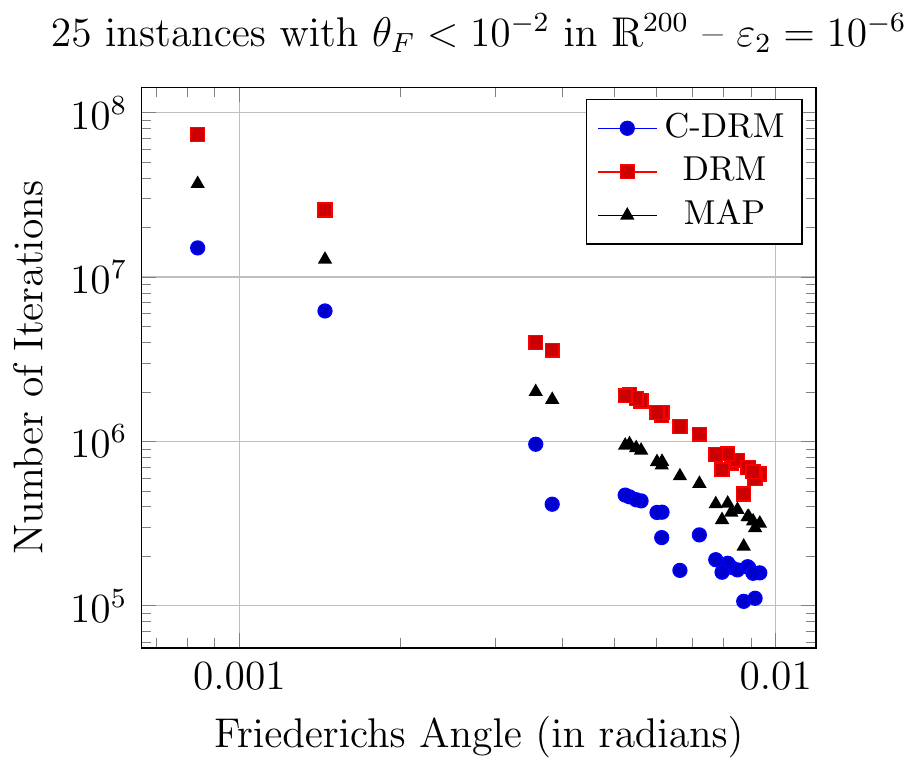}
	 \caption{\label{fig:HighcF-twosubspaces2}}
	
    \end{subfigure}

\caption{Experiments with two subspaces having a small Friederichs angle. \label{fig:HighcF-twosubspaces}}
\end{figure}

Figure~\ref{fig:HighcF-twosubspaces} represents experiments in which the Friederichs angle between the two subspaces is smaller than $10^{-2}$ and the \emph{true error} criterion is used. In this case, MAP and DRM are known to converge slowly. C--DRM, however, handled the small values of the Friederichs angle substantially better.

\subsection{Some non-affine examples}

Next we present two simple classes of examples revealing the potential of the proposed modification when it is applied to the convex and the non-convex case. Here we are using the \emph{gap distance} with $\varepsilon_2 = 10^{-6}$. 

\begin{example}[Two balls in $\RR^2$] 
 We present three figures that depict numerical experiments showing the behavior of C--DRM over DRM concerning the problem of finding a point in the intersection of two convex balls in $\RR^2$.

 Note that Lemma \ref{o-melhor} shows that $C_T(x)$ is closer to  $U\cap V$ than $T(x)$ for problem \eqref{eq:general}. Unfortunately, this is not true in general for convex sets.  Figure~\ref{fig:twoballs1}, illustrates this for two balls.  Here, $x^0$ is the starting point, $\bar x=P_{U\cap V}(x^0)$ is the only point in the intersection, $x_C^1 \coloneqq C_T(x^0)$ and $x_{DR}^1 \coloneqq T(x^0)$. 
 Note however, that this does not mean that C--DRM will not work for the general case. Even though $x_{DR}^1$  is closer to  $\bar x$ than $x_C^1$ is, C--DRM performed way less iterations (37) than DRM (971) to find the solution.

Below we present two examples, in Figures~\ref{fig:twoballs2} and \ref{fig:twoballs3}, where the two balls intersection is still compact but has infinitely many points. Depending on the starting point, we  achieve different results, regarding the number of iterations that both C--DRM and DRM take to converge. Moreover, Figure~\ref{fig:twoballs2} shows that the accumulation point of the sequences generated by C--DRM and DRM are not necessarily the same, with comparable iteration complexity though. In Figure~\ref{fig:twoballs3}, DRM converged after 6 iterations while C--DRM took 7 iterations.

\begin{figure}[!ht]
\begin{subfigure}[t]{0.24\textwidth}
\centering
\frame{\includegraphics[height=.2\textheight]{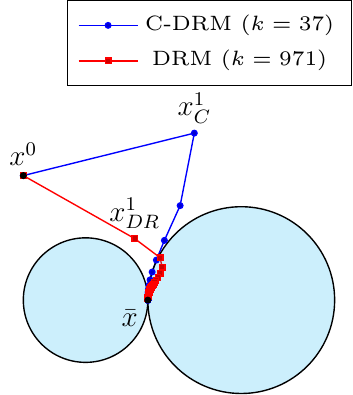}}
	 \caption{}
	\label{fig:twoballs1}
	\end{subfigure}
  \hspace*{.3in}
\begin{subfigure}[t]{0.24\textwidth}
\centering
\frame{\includegraphics[height=.2\textheight]{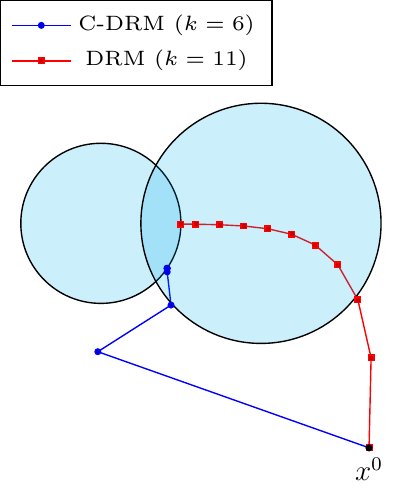}}
	 \caption{}
	\label{fig:twoballs2}
    \end{subfigure}
     \hspace*{.2in}
\begin{subfigure}[t]{0.24\textwidth}
\centering
\frame{\includegraphics[height=.2\textheight]{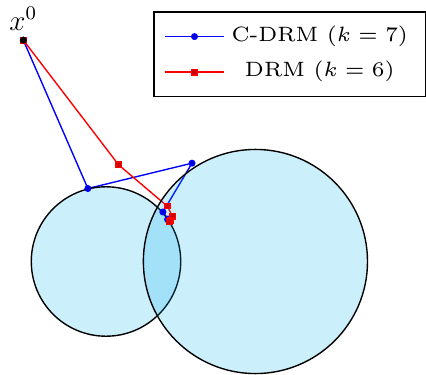}}
	 \caption{}
	\label{fig:twoballs3}

\end{subfigure}
\caption{Experiments with two balls in $\RR^2$. \label{fig:twoballs}}
\end{figure}
\end{example}

We performed extensive experiments featuring two convex balls in $\RR^n$ with starting points being chosen randomly, and the results were very similar to the ones presented in the pictures above. These positive experiments show that it might be possible to use C--DRM to find a point in the intersection of non-affine convex sets. This should be sought in the future. Note however, that C--DRM need not be defined in the general convex setting. This may be the case when an iterate happens to reach the line passing through the centers of the balls in our examples. Therefore, a hybrid strategy is necessary in order to have C--DRM generating an infinite sequence. 
\begin{example}[A ball and a line and a circle and a line in $\RR^2$]
\end{example}
Our examples show that C--DRM is likely to converge in less iterations than DRM. Also, as important as algorithmic complexity, is convergence to a solution itself. 
\begin{figure}[!ht]
\begin{subfigure}[t]{0.24\textwidth}
\centering
\frame{\includegraphics[height=.177\textheight]{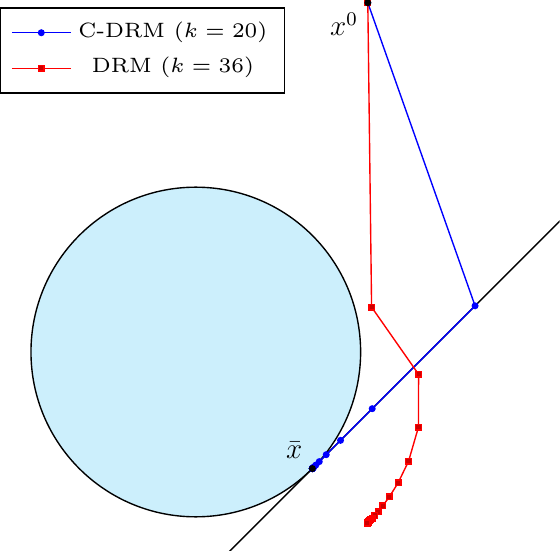}}
	 \caption{}
	\label{fig:ball-line1}
	
\end{subfigure}
   \hspace*{.21in}  
\begin{subfigure}[t]{0.24\textwidth}
\centering
\frame{\includegraphics[height=.177\textheight]{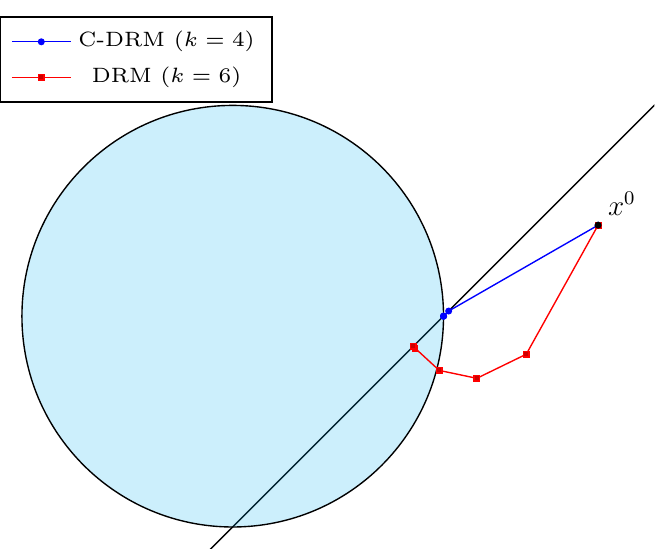}}
	 \caption{}
	\label{fig:ball-line2}
    \end{subfigure}  
    \hspace*{.45in}  
\begin{subfigure}[t]{0.2\textwidth}
\centering
\frame{\includegraphics[height=.177\textheight]{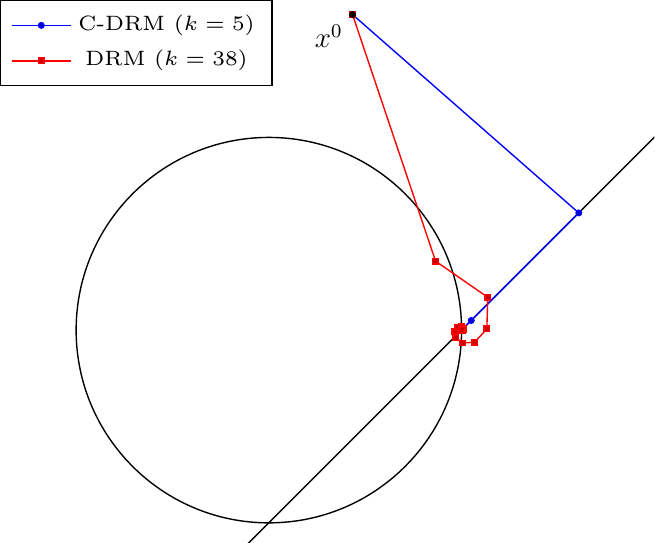}}
	 \caption{}
	\label{fig:ball-line3}
\end{subfigure}
\caption{Experiments with a ball and a line in $\RR^2$. \label{fig:ball-line}}
\end{figure}
In this regard, we underline that our pictures clearly display that C--DRM converges faster than DRM  to a solution of problem \eqref{eq:general}. 
Moreover,  DRM fails to find the unique common point of the ball and the line (only converging in the matter of shadow sequences) and to find the best approximation point in Figure~\ref{fig:ball-line1} and Figure~\ref{fig:ball-line2}, respectively.

Figure~\ref{fig:ball-line3} represents a slightly non-convex example with a circle and a line, which was considered for DRM in \cite{benoist}. In this experiment both  C--DRM and DRM converge and the latter was slower. This reveals an interesting and promising property of C--DRM, as well as DRM, when it is applied to non-convex problems. To end this discussion, it is worth mentioning that we performed extensive numerical tests for particular instances of non-convex problem in $\RR^n$ and the results are very positive. This is a humble attempt  in targeting the problem of finding a point in the intersection of   an affine subspace with the $s$--space vectors defined by the generalized $\ell_0$--ball (see problem (6) of \cite{Hesse:2014gi}).

\section{Conclusions and future work}\label{Fremarks}

We have introduced and derived a convergence analysis for the Circumcentered--Douglas--Rachford method (C--DRM) for best approximation problems featuring two (affine) subspaces $U,V\subset\RR^n$. For any initial point, linear convergence to the best solution was shown and the rate of convergence of C--DRM was proven to be at least as good as DRM's. DRM is known to converge with the sharp rate $c_F\in[0,1)$, the cosine of the Friedrichs angle between $U$ and $V$ (see, {\em e.g.},  \cite{BCNPW14}). A question, left as open in this paper, is whether $c_F$ is a sharp rate for C--DRM. In this regard, our numerical experiments show that \emph{circumcentering} ``speeds up'' convergence of DRM for two subspaces as well as for most of our simple non-affine examples.  

In view of future work, we end the paper with a brief discussion on the many set case and on how one can ``circumcenter'' other classical projection/reflection type methods.

\medskip

\noindent{\bf The many set case.} Another relevant advantage of the circumcentering scheme C--DRM is that it can be  extended to the following many set case. 

Assume that $\{U_i\}_{i=1}^{m}\subset\RR^n$ is a family of finitely many affine subspaces with nonempty intersection $\cap_{i=1}^m U_i$ and that we are interested in projecting onto $\cap_{i=1}^m U_i$  using knowledge provided by reflections onto each $U_i$. 

For an arbitrary initial point $x\in\RR^n$ we could consider a generalized C--DRM iteration by taking the circumcenter $C_T(x)$ of \[\{x,R_{U_1}(x),R_{U_2}R_{U_1}(x),\ldots ,R_{U_m}\cdots R_{U_2}R_{U_1}(x)\},\] {\em i.e.}, $C_T(x)$ is in $\aff\{x,R_{U_1}(x),R_{U_2}R_{U_1}(x),\ldots ,R_{U_m}\cdots R_{U_2}R_{U_1}(x)\}$ and $$\dist(C_T(x), x)=\dist(C_T(x), R_{U_1}(x))=\cdots=\dist(C_T(x), R_{U_m}\cdots R_{U_2}R_{U_1}(x)).$$  The fact that $C_T(x)$ is well defined and that it is precisely given by the projection of any point in $\cap_{i=1}^m U_i$ onto 
\[
  W^x \coloneqq\aff\{x,R_{U_1}(x),R_{U_2}R_{U_1}(x),\ldots ,R_{U_m}\cdots R_{U_2}R_{U_1}(x)\}
\] 
can be derived similarly as in Lemma~\ref{circum-as-proj}. We understand though, that the convergence analysis of  C--DRM for the many set case might be substantially more challenging, since we  no longer can rely on the theory of DRM. Also, one should note that now, for large $m$, the computation of $C_T(x)$ may not be negligible. This computation consists of finding the intersection of $m$ bisectors in $W^x$. Therefore, linear system~\eqref{eq:xc-linsys} is now $m\times m$, and the calculation of $C_T(x)$  requires  its  resolution.

If, for a given problem, the computation of $C_T(x)$ mentioned above is simply too demanding, one could, \emph{e.g.}, consider pairwise circumcenters or even other ways of circumcentering. The important thing here is that we can enforce several strategies to help overcome the unsatisfactory extension of the classical Douglas--Rachford method for the many set case. It is known that for an example in $\RR^2$ with three distinct lines intersecting at the original (see \cite[Example 2.1]{ABTcomb}), the natural extension of the Douglas--Rachford method may fail to find a solution. On the other hand, any reasonable circumcentering scheme will solve this particular problem in at most two steps for any initial point. Circumcentering schemes may also be embedded in methods for the many set case, {\em e.g.}, CADRA~\cite{Bauschke:2015df} and the Cyclic--Douglas--Rachford method \cite{Borwein:2014ka}.

\medskip

\noindent{\bf Circumcentering other reflection/projection type methods.}
For the case of $U$ and $V$ being affine subspaces, the reflected Cimmino method \cite{Cimmino:1938tp} considers a current point $x\in\RR^n$ and takes the next iterate as the mean $\frac{1}{2}(R_U(x)+R_V(x))$. Circumcentering the Cimmino method is possible by setting the next iterate as $\circum\{x,R_U(x),R_V(x)\}$. Something similar can be done for the Method of Alternating Projections (MAP) (see \emph{e.g.} \cite{Deutsch:1985gz}). From a point $x\in\RR^n$, MAP moves to $P_VP_U(x)$. In order to circumcenter MAP, one could take the circumcenter of $x$, $R_U(x)$ and $R_VP_U(x)$. The coherence of this approach lies on the fact that the mid points of the segments $[x,R_U(x)]$ and $[P_U(x)$, $R_VP_U(x)]$ are $P_U(x)\in U$ and $P_VP_U(x)\in V$, respectively. 
Note that both the Circumcentered--Cimmino--Method and the Circumcentered--MAP solve problems with affine subspaces $U,V\subset\RR^2$ in at most two steps. This happens because circumcentering can be seen as a $2$-dimensional hyperplane search for the two set case.

Finally, we consider that investigating C--DRM for general convex feasibility problems \cite{Bauschke:2006ej} may be fruitful.

\paragraph*{Acknowledgements}
We thank the anonymous referees for their valuable suggestions.  

\printbibliography

\end{document}